\newcommand{\linktojournal}[1]{\relax}
\newcommand{\R}{\mathbb{R}}
\newcommand{\N}{\mathbb{N}}
\newcommand{\ep}{\varepsilon}
\newcommand{\pa}{\partial}
\DeclareMathOperator{\supp}{supp}
\DeclareMathOperator*{\esssup}{esssup}
\newcommand{\lr}[1]{{}\langle{}#1{}\rangle{}}
\newcommand{\rev}[1]{{\color{red}{}#1{}}}
\newtheorem{theorem}{Theorem}[section]
\newtheorem{lemma}[theorem]{Lemma}
\newtheorem{proposition}[theorem]{Proposition}
\theoremstyle{remark}
\newtheorem{remark}{Remark}[section]
\theoremstyle{definition}
\newtheorem{definition}{Definition}[section]
\numberwithin{equation}{section}
\def\@cite#1#2{[{{\bfseries #1}\if@tempswa , #2\fi}]}
\begin{document}
\begin{center}
\Large{{\bf
Optimal decay for one-dimensional damped \\
wave equations with potentials via \\
a variant of Nash inequality
}}
\end{center}

\vspace{5pt}
\begin{center}
Motohiro Sobajima%
\footnote{
Department of Mathematics, 
Faculty of Science and Technology, Tokyo University of Science,  
2641 Yamazaki, Noda-shi, Chiba, 278-8510, Japan,  
E-mail:\ {\tt msobajima1984@gmail.com}}
\end{center}

\newenvironment{summary}{\vspace{.5\baselineskip}\begin{list}{}{%
     \setlength{\baselineskip}{0.85\baselineskip}
     \setlength{\topsep}{0pt}
     \setlength{\leftmargin}{12mm}
     \setlength{\rightmargin}{12mm}
     \setlength{\listparindent}{0mm}
     \setlength{\itemindent}{\listparindent}
     \setlength{\parsep}{0pt}
     \item\relax}}{\end{list}\vspace{.5\baselineskip}}
\begin{summary}
{\footnotesize {\bf Abstract.}
The optimality of decay properties of the one-dimensional damped wave equations with potentials belonging to a certain class is discussed. 
The typical ingredient is a variant of Nash inequality which involves an invariant measure for the corresponding Schr\"odinger semigroup. 
This enables us to find a sharp decay estimate from above. 
Moreover, the use of a test function method with the Nash-type inequality provides the decay estimate from below.  
The diffusion phenomena for the damped wave equations with potentials are also considered. 
}
\end{summary}

{\footnotesize{\it Mathematics Subject Classification}\/ (2010): %
Primary:%
	35L15,  
Secondary:%
	35B40. 
}

{\footnotesize{\it Key words and phrases}\/: 
damped wave equations with potentials, Nash inequality, 
diffusion phenomena, energy estimates. 
}

\tableofcontents
\newpage
\section{Introduction}

As is well-known, solutions of the usual damped wave equation behave like those of the heat equation. 
This phenomenon is so-called diffusion phenomenon. 
If we consider damped wave equations with potentials, then the situation seems similar, 
that is, solutions of damped wave equations with potentials can be expected to have an asymptotic behavior similar to those of parabolic equations governed by Schr\"odinger operators. 
However, for the theory of Schr\"odinger operators, 
the large-time behavior of the corresponding semigroup 
can be affected by adding a certain potential even if the size of potential is sufficiently small.
This means that the Schr\"odinger semigroup has a different behavior 
compared with the heat semigroup. 
Our interest is to find this kind of significant change 
in the analysis of damped wave equations. 

To this aim, in this paper, we consider the Cauchy problem of 
one-dimensional linear damped wave equations of the form
\begin{equation}\label{problem}
\begin{cases}
\pa_t^2u(x,t)-\pa_x^2u(x,t)+\pa_tu(x,t)+V(x)u(x,t)=0 
&\text{in}\ \R\times (0,\infty), 
\\
(u,\pa_tu)(\cdot,0)=(u_0,u_1)\in \mathcal{H}=H^1(\R)\times L^2(\R),
\end{cases}
\end{equation}
where $\pa_t=\frac{\pa}{\pa t}$, $\pa_x=\frac{\pa}{\pa x}$, 
and $L^2(\R)$ and $H^1(\R)$ is the usual Lebesgue and Sobolev spaces, respectively. 
The coefficient $V$ is assumed to belong 
the following class:
\begin{equation}\label{intro:V:ass}
\mathcal{V}=\left\{V\in BC(\R)\;;\;V\geq 0\ \&\ 0<\int_{\R} |x|V(x)\,dx <+\infty\right\},
\end{equation}
where $BC(\R)$ stands for the set of all bounded continuous functions in $\R$. 
The simplest examples of $\mathcal{V}$ are 
non-trivial nonnegative functions belonging 
to $C_0^\infty(\R)$ (compactly supported smooth functions). 
The functions $\lr{x}^{-\alpha}$ with $\alpha>2$ also belong to 
the class $\mathcal{V}$, where $\lr{x}=(1+x^2)^{1/2}$ $(x\in \R)$. 
The goal of this paper is 
to clarify the large time behavior of the solution $u$ to \eqref{problem}
and also that of 
the total (local) energy functional defined as 
\begin{align}
\label{intro:energy:V}
E(u;t)&=\int_\R\mathcal{E}(u;x,t)\,dx 
\quad 
\left(E_R(u;t)=\int_{-R}^R\mathcal{E}(u;x,t)\,dx\right),
\end{align}
where $\mathcal{E}(u;\cdot,t)$ is the energy density 
\begin{align}
\label{intro:energy-density:V}
\mathcal{E}(u;x,t)&=(\pa_tu(x,t))^2+(\pa_xu(x,t))^2+V(x)u(x,t)^2.
\end{align}

The $N$-dimensional usual damped wave equation $(V\equiv 0)$ 
\begin{equation}\label{Dd-damped}
\begin{cases}
\pa_t^2u(x,t)-\Delta u(x,t)+\pa_tu(x,t)=0 
&\text{in}\ \R^N\times (0,\infty), 
\\
(u,\pa_tu)(\cdot,0)=(u_0,u_1) 
\end{cases}
\end{equation}
has a large mount of literature. 
The paper \cite{Matsumura1976} by Matsumura 
is the pioneering work in this field. He proved a series of 
inequalities (Matsumura estimates) describing the diffusive structure 
of the damped wave equation 
which is close to that of heat equation 
\begin{equation}\label{Dd-heat}
\begin{cases}
\pa_t v(x,t)-\Delta v(x,t)=0 
&\text{in}\ \R^N\times (0,\infty), 
\\
v(0)=v_0.
\end{cases}
\end{equation}
For instance, by using these inequalities 
one can find the energy decay structure of \eqref{Dd-damped}
as follows:
\[
\|\pa_tu\|_{L^2(\R^N)}^2
+
\|\nabla u\|_{L^2(\R^N)}^2\leq C(1+t)^{-N/2-1}
\Big(\|u_0\|_{H^1\cap L^1(\R^N)}^2+\|u_1\|_{L^2\cap L^1(\R^N)}^2\Big).
\]
Actually it is well-known that 
diffusion phenomena for \eqref{Dd-damped} occur, that is, 
the solution of \eqref{Dd-damped} is asymptotically equivalent to 
the one of \eqref{eq:heat} with $v_0=u_1+u_1$ (see e.g., Hsiao--Liu \cite{HsLi1992} and Yan--Milani \cite{YM2000}). 
For the multi-dimensional case, 
the exterior problems with a suitable boundary condition 
has been dealt with (see Dan--Shibata \cite{DaSh1995}, 
Ikehata--Matsuyama \cite{IkMa2002}, Ikehata \cite{Ikehata2002} 
and also Sobajima--Wakasugi \cite{SoWa2016} and references therein). 
Also in these case, the diffusion phenomenon occurs. 
Of course the asymptotic profile is affected by the boundary. 
On the one hand, in the parabolic sense, the boundary can be understood 
as an absorbing wall. On the other hand, 
in the hyperbolic sense, the one behaves as the reflection wall which makes some difficulty in the analysis of the waves. 
With a different aspect, 
the Cauchy problem of wave equations  
with space- or time-dependent coefficients (damping and potential) 
has already been discussed so far.

Our interest in the present paper is the diffusive structure for the case where damped wave equations equip potentials.
More precisely, we mainly discuss the case where any nonnegative (non-zero) potentials 
cannot be regarded as small perturbations for the Laplacian in the sense of the large-time behavior. 
In the field of stochastic analysis on manifolds, 
such a situation can be explained via the recurrence 
of the Brownian motion in manifolds
(see e.g., Grigor'yan and Saloff-Coste \cite{GS2002}).
If we are restricted to the Euclidean spaces $\R^N$, 
the cases $N=1$ and $N=2$ are suitable.
Here we focus our attention to the one-dimensional case. 
In Matsumura \cite{Matsumura1976}, it has been found (as explained above) that 
solutions of the one-dimensional damped wave equation satisfy
the energy decay
\[
\|\pa_tu\|_{L^2(\R)}^2
+
\|\nabla u\|_{L^2(\R)}^2= O(t^{-3/2})
\]
as $t\to \infty$, which reaches the optimal decay rate for the one-dimensional heat equation. 

Recently, 
Ikehata  \cite{Ikehata_arXiv} considered the problem 
\eqref{problem} with positive potentials.
Roughly speaking, he has succeeded in proving even if $V$ is small enough (but positive),  
the corresponding energy of solutions with compactly supported initial data 
decays like 
\[
E(u;t)\leq Ct^{-2}\Big(\|u_0\|_{H^1(\R)}^2+\|u_1\|_{L^2(\R)}^2\Big)
\]
which is faster than that of solutions of the usual damped wave equation obtained by Matsumura. 
This kind of phenomenon does not appear so far. 
This significant change may be understood as the influence of potentials. 
In Ikehata--Li \cite{IkLi_arXiv}, 
a similar analysis for the following damped wave equations with space-dependent (effective) damping
is also considered:
\begin{equation}\label{problem-IkLi}
\begin{cases}
\pa_t^2u(x,t)-\pa_x^2u(x,t)+a(x)\pa_tu(x,t)+V(x)u(x,t)=0 
&\text{in}\ \R\times (0,\infty), 
\\
(u,\pa_tu)(\cdot,0)=(u_0,u_1).
\end{cases}
\end{equation}
Inspired by the results \cite{Ikehata_arXiv} (and \cite{IkLi_arXiv}), 
we would investigate 
the constant damping case with general potentials belonging to the class $\mathcal{V}$.  

Before stating our result, we clarify 
the definition of solutions to the problem \eqref{problem}. 

\begin{definition}\label{def:ops-sols}
\begin{itemize}
\item[\bf (i)] We define the Schr\"odinger operator $S=-\frac{d^2}{dx^2}+V$ 
acting in $L^2(\R)$ as 
\[
S=S_0+V \quad D(S)=D(S_0) 
\quad \text{with}\ S_0=-\frac{d^2}{dx^2}, \quad D(S_0)=H^2(\R);
\]
note that if $V\in\mathcal{V}$, then 
$S$ is nonnegative and selfadjoint in $L^2(\R)$ and 
the domain of its square root can be characterized as $D(S^{1/2})=H^1(\R)$.
\item[\bf (ii)] We call $u:\R\times [0,\infty)\to \R$ is a weak solution  of \eqref{problem} if 
$u\in C^1([0,\infty);L^2(\R))\cap C([0,\infty);H^1(\R))$ satisfies
\[
(u(t),\pa_tu(t))=e^{t\mathcal{A}}(u_0,u_1), \quad t\geq 0, 
\]
where $e^{t\mathcal{A}}$ is the $C_0$-semigroup on $\mathcal{H}=H^1(\R)\times L^2(\R)$ 
generated by $\mathcal{A}(u,v)=(v,-Su-v)$ endowed with domain $D(\mathcal{A})=H^2(\R)\times H^1(\R)$. 
\end{itemize}
\end{definition}

This immediately gives 
the wellposedness of the problem \eqref{problem} 
via the Hille--Yosida theorem to the abstract Cauchy problem 
in a real Hilbert space $H$ of the form
\begin{align}
\label{intro:abst}
\begin{cases}
z''(t)+Az(t)+z'(t)=0, \quad t>0,
\\
(z,z')(0)=(z_0,z_1)\in D(A^{1/2})\times H
\end{cases}
\end{align}
with a nonnegative selfadjoint operator $A$. 
Since such an abstract problem has the corresponding energy functional
$\|z'(t)\|_H^2+\|A^{1/2}z(t)\|_H^2$, 
we would deal with the energy functional of 
the form \eqref{intro:energy:V}
with \eqref{intro:energy-density:V}.

Now we are in a position to state our result of this paper. 
The first describes a diffusion phenomenon 
and a weighted energy decay estimates. 
Some significant change appears compared with the case $V\equiv 0$.
\begin{theorem}\label{main1}
Assume that $V\in\mathcal{V}$ and 
the pair $(u_0,u_1)\in \mathcal{H}$ satisfies $\lr{x}(u_0+u_1)\in L^1(\R)$. 
Let $u$ be the weak solution of \eqref{problem}. 
Then there exists a positive constant $C$ such that 
for every $t\geq 1$, 
\begin{align*}
\|u(t)-e^{-tS}(u_0+u_1)\|_{L^2(\R)}
&\leq Ct^{-1} \Big(\|u_0\|_{H^1(\R)}+\|u_1\|_{L^2(\R)}\Big), 
\\
\|e^{-tS}(u_0+u_1)\|_{L^2(\R)}
&\leq Ct^{-3/4} \|\lr{x}(u_0+u_1)\|_{L^1(\R)}.
\end{align*}
Moreover, for every $\beta\in[0,1)$, there exists a positive constant $C_\beta$ such that 
for every $t\geq 0$, 
\[
\int_{\R}\frac{\mathcal{E}(u;x,t)}{\lr{x}^{\beta}}\,dx
\leq 
C_\beta
(1+t)^{-(5+\beta)/2}
\Big(\|u_0\|_{H^1(\R)}^2+\|u_1\|_{L^2(\R)}^2+\|\lr{x}(u_0+u_1)\|_{L^1(\R)}^2\Big).
\]
\end{theorem}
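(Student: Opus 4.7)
The three estimates rest on a common framework built around the invariant-measure realization of the Schr\"odinger operator $S$. Because $V\in\mathcal{V}$ satisfies $\int|x|V\,dx<\infty$, the equation $-\phi''+V\phi=0$ admits a positive solution with $\phi(x)\asymp\lr{x}$ at infinity. Conjugating $S$ by $\phi$ yields the divergence-form operator $Lg=-\phi^{-2}(\phi^2g')'$, selfadjoint and nonnegative on $L^2(\R,d\mu)$ with $d\mu=\phi^2\,dx$; since $\mu([-R,R])\asymp R^3$, the weighted space $(\R,d\mu)$ behaves three-dimensional for heat-kernel purposes. The plan is (a) to prove $L^1$--$L^2$ decay for $e^{-tS}$ via a variant Nash inequality in this weighted setting, (b) to transport this decay to the wave equation by a parabolic-comparison argument, and (c) to combine both with weighted multiplier identities to obtain the full weighted energy decay.

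\textbf{Second inequality (Schr\"odinger decay).} I would first establish
\[
\|g\|_{L^2(d\mu)}^{10/3}\le C\,\|g'\|_{L^2(d\mu)}^2\,\|g\|_{L^1(d\mu)}^{4/3},
\]
the exponent $10/3=2+4/3$ being dictated by the cubic volume growth. This can be derived using the canonical coordinate $X(x)=\int_0^x\phi^{-2}(s)\,ds$ that trivializes $L$, or more classically by an $L^\infty$--$L^2$ bound on intervals combined with a Poincar\'e-type inequality in $d\mu$. Feeding this inequality into the classical Nash ODE argument for $y(t)=\|e^{-tL}g\|_{L^2(d\mu)}^2$, together with the $L^1(d\mu)$-contractivity of the Markov semigroup $e^{-tL}$, produces $\|e^{-tL}g\|_{L^2(d\mu)}\le Ct^{-3/4}\|g\|_{L^1(d\mu)}$. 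The conjugation relation $e^{-tS}f=\phi\,e^{-tL}(f/\phi)$, together with $\|f\|_{L^2}=\|g\|_{L^2(d\mu)}$, $\|\phi f\|_{L^1}=\|g\|_{L^1(d\mu)}$, and $\phi\asymp\lr{x}$, then delivers the second inequality.

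\textbf{First inequality (diffusion phenomenon).} Let $v(t)=e^{-tS}(u_0+u_1)$ and $r(t)=u(t)-v(t)$. From $\partial_tv+Sv=0$ one computes
\[
\partial_t^2r+\partial_tr+Sr=-S^2v,\qquad r(0)=-u_1,\quad \partial_tr(0)=u_1+S(u_0+u_1).
\]
Parabolic smoothing gives $\|S^2v(t)\|_{L^2}\lesssim t^{-2}\|u_0+u_1\|_{L^2}$, so the forcing is time-integrable past $t=1$. Testing the equation for $r$ against the standard time-weighted multiplier $(1+t)^2\partial_tr+(1+t)r$, absorbing the forcing via Cauchy--Schwarz and Gr\"onwall, yields $(1+t)^2\|r(t)\|_{L^2}^2\lesssim\|u_0\|_{H^1}^2+\|u_1\|_{L^2}^2$, which is the first inequality.

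\textbf{Third inequality and main obstacle.} Combining the first two estimates via the triangle inequality produces the seed bound $\|u(t)\|_{L^2}\lesssim t^{-3/4}\bigl(\|u_0\|_{H^1}+\|u_1\|_{L^2}+\|\lr{x}(u_0+u_1)\|_{L^1}\bigr)$. To reach the weighted energy estimate, I would run a multiplier identity for \eqref{problem} with test functions of the form $(1+t)^{k}\lr{x}^{-\beta}\partial_tu$ and $(1+t)^{k-1}\lr{x}^{-\beta}u$, plus a corrector proportional to $(1+t)^{k-1}(\lr{x}^{-\beta})'\,u\,\partial_xu$ to absorb the commutator produced by differentiating the weight. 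Plugging the seed bound into the lower-order terms and choosing $k=(5+\beta)/2$ is expected to close the Gr\"onwall loop. The hard part, which is where the optimality of the rate $(5+\beta)/2$ lives, is checking that the spurious terms of sizes $\lr{x}^{-\beta-1}$ and $\lr{x}^{-\beta-2}$ arising from the weight are controlled by the $\phi^2$-coercivity inherited from the invariant-measure structure; this amounts to a careful interpolation between the divergence-form energy weighted by $\phi^2\asymp\lr{x}^2$ and the Euclidean energy density $\mathcal{E}(u;x,t)\lr{x}^{-\beta}$ appearing in the statement.
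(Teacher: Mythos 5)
Your argument for the second inequality is the paper's own argument viewed through the ground-state transform: the weighted Nash inequality $\|g\|_{L^2(d\mu)}^{10/3}\leq C\|g'\|_{L^2(d\mu)}^{2}\|g\|_{L^1(d\mu)}^{4/3}$ is exactly Proposition \ref{prop:nash-type} rewritten with $f=\phi g$, and the Nash ODE argument together with $L^1(d\mu)$-contractivity (Lemma \ref{lem:C_M-inv}) is precisely how Proposition \ref{lem:L2-wL1} is proved. That part is sound, provided you actually supply a proof of the weighted Nash inequality (the paper does this by a dyadic decomposition in the level sets of $\psi_V$, Lemma \ref{lem:nash-pre}, combined with the Hardy inequality of Lemma \ref{lem:hardy:V}; your two suggested routes are only sketched).

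The first inequality contains a genuine gap. Setting $r=u-e^{-tS}v_0$ and running a time-weighted energy estimate on $\partial_t^2r+\partial_tr+Sr=-S^2v$ cannot close with only $\|u_0\|_{H^1}+\|u_1\|_{L^2}$ on the right-hand side. To extract $\|r(t)\|_{L^2}\lesssim t^{-1}$ you must work at the level $(1+t)^3\partial_tr+(1+t)^2r$, and after Cauchy--Schwarz the forcing contributes $\int(1+t)^3\|S^2v(\tau)\|_{L^2}^2\,d\tau$; analytic smoothing only gives $\|S^2v(\tau)\|_{L^2}\lesssim\tau^{-2}\|v_0\|_{L^2}$, so this integral diverges logarithmically, and integrating by parts in time using $S^2v=\partial_t^2v$ does not repair it. (In addition $\partial_tr(0)=u_1+Sv_0\notin L^2$ for data in $H^1\times L^2$, so the estimate cannot even be initialized at $t=0$.) The paper avoids this entirely by the decomposition $u(t)=\partial_t\big(U(t)u_0\big)+U(t)v_0$ (Lemma \ref{lem:5.1}): the first piece is handled by the ordinary Matsumura energy method for the \emph{homogeneous} equation, and the comparison of $U(t)v_0$ with $e^{-tS}v_0$ is delegated to the abstract Radu--Todorova--Yordanov lemma (Lemma \ref{lem:matsumura-abst}), which is proved by spectral decomposition. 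That spectral input is exactly what your energy method with a forcing term fails to reproduce.

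The third inequality is also not closed. Feeding the seed bound $\|u(t)\|_{L^2}^2\lesssim t^{-3/2}$ into a multiplier identity at weight $(1+t)^{(5+\beta)/2}$ leaves lower-order terms of size $\int(1+\tau)^{(1+\beta)/2}\tau^{-3/2}\,d\tau$, which diverges for every $\beta\geq0$, and the commutator terms you yourself flag as ``the hard part'' are precisely where the scheme breaks; no mechanism is offered to control them. The paper's route is structurally different and avoids any weighted multiplier for the hyperbolic equation: only the parabolic profile $v=e^{-tS}v_0$ needs the spatial weight, and for it Lemma \ref{lem:hardy:V} (iii) gives $\int\mathcal{E}(v;x,t)(\psi_1\psi_2)^{-\beta}dx\lesssim\|Sv\|_{L^2}^{1+\beta/2}\|v\|_{L^2}^{1-\beta/2}\lesssim t^{-(5+\beta)/2}$ directly via $\|Se^{-tS}\|_{L^2\to L^2}\leq t^{-1}$ and the $t^{-3/4}$ decay; the remainders $\partial_t\big(U(t)u_0\big)$ and $U(t)v_0-e^{-tS}v_0$ already decay at the faster unweighted rate $t^{-3}$, so the weight costs nothing there. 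Supplying this Hardy--Rellich step (or an equivalent) is the missing idea in your proposal.
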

\begin{remark}
The couple of the first estimates describes 
the appearance of diffusion phenomena. 
We emphasize that the asymptotic profile $e^{-tS}(u_0+u_1)$ 
has a different behavior compared with the usual damped wave equation 
$\|e^{-tS_0}(u_0+u_1)\|_{L^2}=O(t^{-1/4})$ as $t\to \infty$.
\end{remark}
\begin{remark}
For the second estimate, 
the choice $\beta=0$ gives the decay estimate for the total energy 
\[
E(u;t)=O(t^{-5/2})
\]
as $t\to \infty$. Therefore we could find a total energy decay estimate which is 
faster than those of the results in Ikehata \cite{Ikehata_arXiv} and Ikehata--Li \cite{IkLi_arXiv}.
On the other hand, 
by the different choice $\beta=1-2\ep$ with $\ep\ll 1$ 
we can find 
an estimate describing the local energy decay 
faster than total energy decay:
for every $R>0$, 
\[
E_R(u;t)=O(t^{-3+\ep})
\]
as $t\to \infty$. 
From the viewpoint of diffusion phenomena, 
it is remarkable that 
the Gaussian function $G(x,t)=(4\pi t)^{-1/2}e^{-\frac{x^2}{4t}}$, 
as the typical solution of the heat equation, 
satisfies for every $R>0$, 
\begin{align*}
t^3\|\pa_x G(\cdot,t)\|_{L^2((-R,R))}^2
&=
\frac{1}{16\pi}\int_{-R}^Rx^2e^{-\frac{x^2}{2t}}\,dx
\to 
\frac{R^3}{24\pi},
\\
t^5\|\pa_t G(\cdot,t)\|_{L^2((-R,R))}^2
&=
\frac{1}{64\pi }\int_{-R}^R x^4e^{-\frac{x^2}{2t}}\,dx
\to 
\frac{R^5}{160\pi}
\end{align*}
as $t\to \infty$. Although we do not know whether such a local energy decay 
$E_R(u;t)=O(t^{-3+\ep})$
is sharp or not, 
the strategy of the proof of Theorem \ref{main1} might be 
a suggestion of viewpoints 
for the analysis of local energy decay of hyperbolic equations.
\end{remark}

Next we consider the optimality of the total energy decay 
obtained in Theorem \ref{main1}.

\begin{theorem}\label{main2}
Assume that $V\in\mathcal{V}$ 
and 
the pair $(u_0,u_1)\in \mathcal{H}$ satisfies $\lr{x}(u_0+u_1)\in L^1$. 
If $u_0+u_1$ is non-trivial and nonnegative, then 
there exist positive constants $c$ and  $c'$ such that 
for every $t\geq 0$, 
\begin{gather*}
\int_{0}^t(1+\tau)^{1/2}\|u(\tau)\|_{L^2}^2\,d\tau\geq c\log (1+t), 
\\
\int_{0}^t(1+\tau)^{3/2}E(u;\tau)\,d\tau\geq c'\log (1+t).
\end{gather*}
\end{theorem}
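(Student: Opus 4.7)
The proof would combine a test-function identity (using a ground state of $S=-\pa_x^2+V$) with the diffusion phenomenon of Theorem \ref{main1} and a standard weighted energy identity. Because $V\in\mathcal{V}$ satisfies $\int_\R|x|V\,dx<\infty$, one-dimensional ODE theory provides a positive $\phi\in C^2(\R)$ with $S\phi=0$ and $\phi(x)=|x|+O(1)$ as $|x|\to\infty$, constructed by perturbing the free solutions $1$ and $x$. Testing \eqref{problem} against the time-independent $\phi$ and integrating by parts in $x$, the scalar $F(t):=\int_\R u(x,t)\phi(x)\,dx$ satisfies $F''(t)+F'(t)=0$, hence
\[
F(t)=A+(F(0)-A)e^{-t},\qquad A:=F(0)+F'(0)=\int_\R(u_0+u_1)\phi\,dx.
\]
The moment assumption $\lr{x}(u_0+u_1)\in L^1$ makes $A$ finite, and the sign hypothesis combined with $\phi>0$ forces $A>0$; in particular $|F(t)|\geq A/2$ for large $t$.

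For the first inequality, set $v(t)=e^{-tS}(u_0+u_1)\geq 0$ (using positivity of the Schr\"odinger semigroup). The identity $S\phi=0$ yields the conservation $\int_\R v(t)\phi\,dx=A$ for all $t\geq 0$. Cutting off at $R(t)=K\sqrt{1+t}$ with $K$ large and using Cauchy--Schwarz together with $\|\phi\chi_{[-R,R]}\|_{L^2}^2\sim R^3$,
\[
A\leq C(1+t)^{3/4}\|v(t)\|_{L^2}+\int_{|x|>R(t)}v(x,t)\phi(x)\,dx.
\]
The tail is $\leq A/2$ for $K$ sufficiently large via heat-kernel estimates for the $\phi$-transform of $e^{-tS}$, which acts on the weighted space $L^2(\phi^2\,dx)$ whose volume growth is cubic; this is precisely the regime described by the variant of Nash inequality emphasized in the paper. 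Consequently $\|v(t)\|_{L^2}^2\geq c(1+t)^{-3/2}$, and combining with the diffusion estimate $\|u(t)-v(t)\|_{L^2}\leq C(1+t)^{-1}$ from Theorem \ref{main1} gives $\|u(t)\|_{L^2}^2\geq c'(1+t)^{-3/2}$ for $t\geq t_1$. Integrating the inequality $(1+\tau)^{1/2}\|u(\tau)\|_{L^2}^2\geq c'(1+\tau)^{-1}$ on $[t_1,t]$ produces the desired $c\log(1+t)$ bound.

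For the second inequality, test \eqref{problem} against $u$ and integrate in $x$ to obtain
\[
\tfrac{1}{2}\tfrac{d^2}{dt^2}\|u(t)\|_{L^2}^2+\tfrac{1}{2}\tfrac{d}{dt}\|u(t)\|_{L^2}^2+E(u;t)=2\|\pa_tu(t)\|_{L^2}^2.
\]
Multiply by $(1+t)^{3/2}$, integrate over $[0,T]$, and integrate by parts twice to transfer derivatives off $\|u\|_{L^2}^2$; the dominant effect is to produce the favorable-sign contribution $+\tfrac{3}{4}\int_0^T(1+\tau)^{1/2}\|u(\tau)\|_{L^2}^2\,d\tau$ on the left-hand side, while the boundary terms and lower-order remainders are all $O(1)$ by the Theorem \ref{main1} upper estimates $\|u(t)\|_{L^2}^2\leq C(1+t)^{-3/2}$ and $E(u;t)\leq C(1+t)^{-5/2}$. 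Combining with the first inequality already obtained yields $\int_0^T(1+\tau)^{3/2}E(u;\tau)\,d\tau\geq c'\log(1+T)-O(1)$, whence the second estimate follows for $T$ large. The principal obstacle in this program lies in the tail estimate $\int_{|x|>K\sqrt{1+t}}v\phi\,dx\leq A/2$: it is a mean-displacement bound for the Markov process generated by the $\phi$-conjugated semigroup and relies crucially on the weighted heat-kernel theory underpinning the variant Nash inequality; once granted, all remaining manipulations are classical.
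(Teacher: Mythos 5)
Your reduction to the semigroup part $v(t)=e^{-tS}(u_0+u_1)$ via the Theorem \ref{main1} estimates matches the paper, and your derivation of the second inequality from the first via the identity $\tfrac12(\|u\|_{L^2}^2)''+\tfrac12(\|u\|_{L^2}^2)'+E(u;t)=2\|\pa_tu\|_{L^2}^2$, weighted by $(1+t)^{3/2}$ and integrated by parts, is a legitimate alternative to the paper's route (the paper instead converts the $L^2$ lower bound into a Dirichlet-energy lower bound directly through the Nash-type inequality of Proposition \ref{prop:nash-type}). The problem is the first inequality. You aim at the \emph{pointwise} lower bound $\|v(t)\|_{L^2}\geq c(1+t)^{-3/4}$, and the step that carries all the weight --- the tail estimate $\int_{|x|>K\sqrt{1+t}}v(x,t)\phi(x)\,dx\leq A/2$ --- is asserted, not proved. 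Nothing in the paper supplies it: the variant Nash inequality and the weighted $L^1$-contraction (Lemma \ref{lem:C_M-inv}) give only \emph{on-diagonal} upper bounds for $e^{-tS}$, whereas your tail bound is an off-diagonal (Gaussian/Davies--Gaffney type) estimate for the $\psi_V$-conjugated kernel on a space of cubic volume growth. That is a substantive separate piece of heat-kernel theory, and it is precisely the hard part of your program. A secondary gap of the same flavor: the exact conservation $\int_\R v(t)\phi\,dx=A$ for all $t$ is the invariant-measure identity, whose proof the paper explicitly omits; what is actually proved there is only the one-sided contraction $\|\psi_Vv(t)\|_{L^1}\leq\|\psi_Vv_0\|_{L^1}$, and upgrading it to equality again amounts to ruling out mass escape, i.e.\ to a tail estimate. (Your ODE $F''+F'=0$ for $F(t)=\int u\phi\,dx$ has the further defect that $F(0)=\int u_0\phi\,dx$ need not converge, since only $\lr{x}(u_0+u_1)\in L^1$ is assumed, not $\lr{x}u_0\in L^1$; but this piece is not actually needed for your argument.)

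The paper avoids all of this by proving only the \emph{time-integrated} lower bound, which is why the theorem is stated in integrated form. Concretely (Proposition \ref{prop:lower}): one tests the parabolic equation against $\psi_V\,\eta\bigl(\tfrac{|x|^2+t}{R}\bigr)$ with a compactly supported cutoff $\eta$, obtaining for each $R\geq R_0$ a bound of $\bigl(\int f\,d\mu_V\bigr)^{5/3}$ by $\int_0^\infty\!\!\int_\R|v|^{5/3}\chi(\xi_R)\psi_V^{1/3}\,dx\,dt$; dividing by $R$ and integrating in $R$ produces the logarithm, and Fubini plus the interpolation $\|\psi_V^{1/5}v\|_{L^{5/3}}\leq\|\psi_Vv\|_{L^1}^{1/5}\|v\|_{L^2}^{4/5}$ together with the $L^1(\psi_V\,dx)$-contraction yields $\int_0^t\|v(\tau)\|_{L^2}^{4/3}\,d\tau\geq c\log(1+t)$, whence the stated bound by H\"older in time. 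Only compactly supported test functions and the already-proved contraction inequality enter; no pointwise-in-time lower bound and no off-diagonal kernel estimates are needed. Unless you can actually prove your tail estimate (which would give a result strictly stronger than Theorem \ref{main2}), your argument for the first inequality --- and hence, as written, for the second as well --- is incomplete.
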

\begin{remark}
Theorem \ref{main1} yields the opposite side 
of the estimate in Theorem \ref{main2}:
\[
\int_{0}^t(1+\tau)^{3/2}E(u;\tau)\,d\tau\leq 
C_0
\Big(\|u_0\|_{H^1}^2+\|u_1\|_{L^2}^2+\|\lr{x}(u_0+u_1)\|_{L^1}^2\Big)\log (1+t).
\]
Therefore Theorem \ref{main2} assures that the decay rate $5/2$ of 
the total energy is optimal. 
\end{remark}

Here we would briefly explain the concept and idea 
in this paper. 
Our approach depends on the fundamental result 
for the abstract evolution equation \eqref{intro:abst} 
provided by Radu--Todorova--Yordanov \cite{RaToYo2011}: 
\begin{align*}
\big\|A^{1/2} (\Theta(t) g-e^{-tA}g)\big\|_{H}
\leq 
C\Big(
t^{-3/2}\|e^{-\frac{t}{2}A}g\|_{H}
+e^{-t/16}\|(A^{1/2}+1)^{-1}A^{1/2}g\|_H
\Big),
\end{align*}
where $\Theta (\cdot)g$ is the solution of \eqref{intro:abst} with $(z_0,z_1)=(0,g)$.
Then it becomes clear that 
the estimate for $e^{-tA}$ plays a crucial role.
In our situation, 
the profile of the Schr\"odinger semigroup $e^{-tS}$ 
is essential. 
In the criticality theory of Schr\"odinger operators (developed by Murata \cite{Murata1984}), 
the operator $S_0=-\frac{d^2}{dx^2}$ is critical 
which means that $S_0$ is unstable under the small perturbation with nonnegative potentials. 
This can be explained by the (non-)uniqueness of positive harmonic function (modulo constants)
with respect to $\mathcal{S}=-\frac{d^2}{dx^2}+V$ in the sense of distribution $\mathcal{D}'(\R)$;
\begin{equation}\label{intro:harmonics}
\mathcal{S}\psi =-\psi''+V\psi=0\quad \text{in}\ \mathcal{D}'(\R).
\end{equation}
If $V\in \mathcal{V}$, then \eqref{intro:harmonics} has two linearly independent positive solutions 
$\psi_1$ and $\psi_2$. On the one hand, 
by using $\psi_1$ and $\psi_2$, by a potential theory-like technique
we can prove a couple of Hardy-Rellich type inequalities 
for the Schr\"odinger operator $S$. 
On the other hand, the positive harmonic function $\psi$ with respect to $\mathcal{S}$ 
also provides so-called invariant measure $d\mu=\psi(x)dx$ for the semigroup $e^{-tS}$:
\[
\int_{\R}e^{-tS}f\,d\mu
=\int_{\R}f\,d\mu, \quad f\in L^2(\R)\cap L^1(\R,d\mu),\ t>0.
\]
Since the sharp $L^2$-decay estimate for the $N$-dimensional heat semigroup $e^{t\Delta}$ 
is provided via the Nash inequality
\[
\|f\|_{L^2(\R^N)}^{2+4/N}\leq C_{\rm Nash}\|f\|_{L^1(\R^N)}^{4/N}\|\nabla f\|_{L^2(\R^N)}^2, \quad f\in H^1(\R^N)\cap L^1(\R^N)
\] 
(for the best possible constant $C_{\rm Nash}$ see e.g., Carlen--Loss \cite{CaLo1993} and also Lemma \ref{lem:nash})
and $dx$ can be regarded as the invariant measure for $e^{t\Delta}$, 
it is natural to move to discuss the validity of 
the Nash-type inequality of the form 
\[
\|f\|_{L^2(\R)}^{2+\gamma}
\leq 
C
\|f\|_{L^1(\R,d\mu)}^{\gamma}\|S^{1/2}f\|_{L^2(\R)}^2
\quad 
f\in H^1(\R)\cap L^1(\R,d\mu).
\]
This inequality enables us to find a $L^2$-decay estimate for the Schr\"odinger semigroup $e^{-tS}$ as 
\[
\|e^{-tS}f\|_{L^2(\R)}\leq Ct^{-1/\gamma}\|f\|_{L^1(\R,d\mu)}, \quad t>0.
\]
In our situation, the parameter $\gamma$ in the above estimate 
can be chosen as $\gamma=4/3$ 
which is surprisingly different from $\gamma=4$ (the case with $V\equiv 0$). 
This provides the decay estimate of the Dirichlet energy for 
the semigroup $e^{-tS}$ as 
\[
\|S^{1/2}e^{-tS}f\|_{L^2(\R)}^2\leq Ct^{-5/2}\|f\|_{L^1(\R,d\mu)}^2, \quad t>0.
\]
The above estimate finally employed to the energy decay of 
the damped wave equation with the potential $V\in\mathcal{V}$. 
Several estimates with weights vanishing at the spatial infinity $(\beta>0)$ 
are consequences of the validity of Hardy--Rellich inequalities with a parameter.
To prove the energy estimates from below, we employ 
so-called {\it test function method}
which is well-known as a technique for proving the blowup phenomena 
for some nonlinear problem with a special structure. 
Our argument is the use of the test function method with positive harmonic functions with respect to $\mathcal{S}$ for the {\it linear} problem 
with a variant of the Nash inequality. 
Roughly speaking, in the case without potential, 
if $L^1$-norm of $u(t)$ is completely known, then the Nash inequality enables us 
to find the lower bound of $\|\nabla u(t)\|_{L^2}$ via the 
lower bound of the $L^2$-norm of $u(t)$. 
The lower bound of some integral of $|u(t)|$ can be derived via the test function method. 
The proof of Theorem \ref{main2} is done by a suitable modification of the strategy explained above. 

Throughout in this paper after introduction, 
we focus our attention to the case of one-dimension 
and use the following notations for shorten the notation:
\[
\|f\|_{L^p}=
\|f\|_{L^p(\R)}=
\begin{cases}
\displaystyle 
\left(\int_{\R}|f|^p\,dx\right)^{1/p} & \text{if}\ 1\leq p<\infty, 
\\[3pt]
\displaystyle 
\esssup_{x\in\R}|f(x)|, & \text{if}\ p=\infty.
\end{cases}
\]
The present paper is organized as follows: 
In Section \ref{sec:pre}, we collect the fundamental facts, 
especially, several facts in ordinary differential equations, 
the Nash inequality in one-dimension 
and the diffusion phenomenon for the abstract evolution equation related to the damped wave equation.
In Section \ref{sec:hardy-rellich-nash}, 
we show the validity of Hardy-Rellich inequalities 
with respect to the Schr\"odinfer operator $S$. 
A variant of Nash inequality is also introduced in Section \ref{sec:hardy-rellich-nash}. 
In Section \ref{sec:semigroup},
the upper and lower bounds of the Dirichlet energy $\|S^{1/2}e^{-tS}f\|_{L^2}^2$ 
are provided. 
Then finally in Section \ref{sec:damped} we discuss the energy decay 
of solutions to \eqref{problem} and its optimality.

\section{Preliminaries}\label{sec:pre}

First we recall the fundamental fact of 
the theory of ordinary differential equations. 
We focus our attention to 
the homogeneous problem 
\begin{equation}\label{eq:harmonic:V}
\mathcal{S}\psi=-\psi''+V\psi=0, \quad x\in \R.
\end{equation}
The following lemma gives 
a well-behaved fundamental system of \eqref{eq:harmonic:V}
(see e.g., Olver \cite{Olver} and also Metafune--Sobajima \cite{equadiff2017}).
\begin{lemma}\label{lem:ode:V}
Assume that $V\in \mathcal{V}$. 
Then there exist 
an increasing solution $\psi_1$
and 
a decreasing solution $\psi_2$ of \eqref{eq:harmonic:V}
satisfying 
\begin{gather}\label{eq:asym-psi}
\lim_{x\to-\infty}\big(\psi_1(x),x\psi_1'(x)\big)=(1,0), 
\quad
\lim_{x\to+\infty}\big(\psi_2(x),x\psi_2'(x)\big)=(1,0)
\end{gather}
with the Wronskian 
$k_V\equiv \psi_1'\psi_2-\psi_1\psi_2'>0$. 
Moreover, one has 
\begin{gather*}
\lim_{x\to+\infty}\left(\frac{\psi_1(x)}{x},\psi_1'(x)\right)=(k_V,k_V), 
\quad
\lim_{x\to-\infty}\left(\frac{\psi_2(x)}{x},\psi_2'(x)\right)=(-k_V,-k_V)
\end{gather*}
and 
$\|\psi_1'\|_{L^\infty}=
\|\psi_2'\|_{L^\infty}=
\|\psi_1'\psi_2+\psi_1\psi_2'\|_{L^\infty}=k_V$.
\end{lemma}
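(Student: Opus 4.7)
My approach is to produce $\psi_1$ as the unique bounded solution, near $-\infty$, of the Volterra integral equation
\begin{equation*}
\psi_1(x)=1+\int_{-\infty}^x(x-y)V(y)\psi_1(y)\,dy,
\end{equation*}
which is the double-antiderivative form of $\mathcal{S}\psi=0$ normalized by $\psi_1(-\infty)=1$, $\psi_1'(-\infty)=0$. For $y\leq x\leq x_0\leq 0$ one has $0\leq x-y\leq|y|$, so the operator defined by the right-hand side contracts on $C_b((-\infty,x_0])$ once $x_0$ is chosen with $\int_{-\infty}^{x_0}|y|V(y)\,dy<1/2$; the hypothesis $\int_\R|x|V\,dx<\infty$ makes this possible. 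The resulting fixed point satisfies $\psi_1''=V\psi_1$ on $(-\infty,x_0]$ and extends to all of $\R$ by standard linear ODE theory, since $V$ is bounded and continuous. The function $\psi_2$ is produced symmetrically from $+\infty$.

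Next I read the prescribed asymptotics straight off the integral equation. Dominated convergence gives $\psi_1(x)\to 1$ as $x\to-\infty$, while differentiating yields $\psi_1'(x)=\int_{-\infty}^xV(y)\psi_1(y)\,dy$, and the bound $|x|\leq|y|$ for $y\leq x\leq 0$ shows $|x\psi_1'(x)|\leq\int_{-\infty}^x|y|V(y)\psi_1(y)\,dy\to 0$. Since $V\geq 0$, the ODE forces $\psi_1''\geq 0$ wherever $\psi_1\geq 0$; combined with $\psi_1'(-\infty)=0$ this gives $\psi_1'\geq 0$ and $\psi_1\geq 1$ globally, so $\psi_1$ is increasing and positive.

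For the behavior at $+\infty$ I need a linear growth bound. Dividing the Fubini form $\psi_1(x)=\psi_1(0)+\psi_1'(0)x+\int_0^x(x-y)V(y)\psi_1(y)\,dy$ by $1+x$ and using $(x-y)/(1+x)\leq 1$ produces
\begin{equation*}
\frac{\psi_1(x)}{1+x}\leq C+\int_0^xV(y)(1+y)\,\frac{\psi_1(y)}{1+y}\,dy,\qquad x\geq 0,
\end{equation*}
and Gronwall applied to $u(x)=\psi_1(x)/(1+x)$ gives $\psi_1(x)\leq C(1+|x|)$ globally, since $(1+|y|)V\in L^1(\R)$. Consequently $k_V:=\int_\R V(y)\psi_1(y)\,dy\in(0,\infty)$ (strictly positive because $V\not\equiv 0$ and $\psi_1\geq 1$), and $\psi_1'(x)\to k_V$ as $x\to+\infty$. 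The identity $\psi_1(x)=1+x\psi_1'(x)-\int_{-\infty}^xyV(y)\psi_1(y)\,dy$ then yields $\psi_1(x)/x\to k_V$. A symmetric analysis for $\psi_2$ produces a constant $k^{(2)}>0$ with $\psi_2'(-\infty)=-k^{(2)}$ and $\psi_2(x)/x\to-k^{(2)}$ at $-\infty$.

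Finally I identify constants and extract norm bounds. The Wronskian $W=\psi_1'\psi_2-\psi_1\psi_2'$ is constant; evaluating as $x\to+\infty$ one has $\psi_1'\to k_V$, $\psi_2\to 1$, and $|\psi_1(x)\psi_2'(x)|\leq C|x|\int_x^\infty V\psi_2\,dy\leq C\int_x^\infty yV\psi_2\,dy\to 0$, giving $W=k_V$; the analogous calculation at $-\infty$ gives $W=k^{(2)}$, so the two constants coincide. Since $\psi_1'$ is non-decreasing from $0$ to $k_V$ and $\psi_2'$ is non-decreasing from $-k_V$ to $0$, we obtain $\|\psi_1'\|_\infty=\|\psi_2'\|_\infty=k_V$. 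Writing $\psi_1'\psi_2+\psi_1\psi_2'=k_V+2\psi_1\psi_2'$ from the Wronskian relation, and using $\psi_1\geq 1$, $\psi_2'\leq 0$, together with $\psi_1\psi_2'\geq-k_V$ (from $\psi_1'\psi_2\geq 0$), yields $|\psi_1'\psi_2+\psi_1\psi_2'|\leq k_V$, with the bound attained in the limits $x\to\pm\infty$. The main obstacle is the linear growth bound for $\psi_1$ at $+\infty$: the fixed-point argument at $-\infty$ is routine, but the conclusion $\psi_1(x)=O(|x|)$, needed to ensure $k_V<\infty$ and to justify the limit computations, depends on the full strength of the weighted hypothesis $(1+|y|)V\in L^1(\R)$ via the Gronwall step.
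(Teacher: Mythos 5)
The paper offers no proof of this lemma --- it is quoted from the literature (Olver, and Metafune--Sobajima) --- so there is no in-paper argument to compare against. Your construction is the standard one those references use: the Volterra equation $\psi_1(x)=1+\int_{-\infty}^x(x-y)V(y)\psi_1(y)\,dy$ solved by contraction on a half-line where $\int_{-\infty}^{x_0}|y|V(y)\,dy<1/2$, convexity ($\psi_1''=V\psi_1\ge0$) to propagate $\psi_1'\ge0$ and $\psi_1\ge1$, a Gronwall bound $\psi_1(x)\le C(1+|x|)$ using $(1+|y|)V\in L^1(\R)$ (which indeed follows from $V\in BC(\R)$ and $\int|y|V\,dy<\infty$), and evaluation of the constant Wronskian at $\pm\infty$ to identify $k_V=\int_\R V\psi_1\,dy=\int_\R V\psi_2\,dy>0$. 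The three $L^\infty$ identities then follow correctly from $\psi_1'\nearrow k_V$, $\psi_2'\nearrow 0$, and $\psi_1'\psi_2+\psi_1\psi_2'=k_V+2\psi_1\psi_2'$ together with $-k_V\le\psi_1\psi_2'\le0$.

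One step is stated too quickly. You deduce $\psi_1(x)/x\to k_V$ from the identity $\psi_1(x)=1+x\psi_1'(x)-\int_{-\infty}^x yV\psi_1\,dy$, which reads as if $\int_{-\infty}^{\infty}yV\psi_1\,dy$ were convergent. It need not be: since $\psi_1(y)\sim k_Vy$ at $+\infty$, convergence would require $\int y^2V\,dy<\infty$, which the class $\mathcal{V}$ does not guarantee (e.g.\ $V=\lr{x}^{-\alpha}$ with $2<\alpha\le3$). The conclusion still holds: either apply l'H\^{o}pital's rule directly to $\psi_1'(x)\to k_V$, or estimate
\[
\frac1x\int_0^x yV\psi_1\,dy\le \frac1x\int_0^M yV\psi_1\,dy+C\,\frac{1+x}{x}\int_M^\infty yV\,dy
\]
and let $x\to\infty$ and then $M\to\infty$. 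With that small repair the argument is complete and correct.
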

\begin{remark}
Existence of the prescribed asymptotic behavior \eqref{eq:asym-psi} is
valid under the integrability of $xV(x)$. 
If $V\geq 0$ and the pair $(\psi_1,\psi_2)$ is not linearly independent, then 
the convexity of $\psi_1$ gives $\psi_1=\psi_2\equiv 1$. This case is nothing but $V\equiv 0$
which is removed from the class $\mathcal{V}$.
\end{remark}

Next we recall the Nash inequality in one-dimension. 
The best possible constant is found in Carlen--Loss {\cite{CaLo1993} 
for all dimensions.
\begin{lemma}[Carlen--Loss {\cite{CaLo1993}}]\label{lem:nash}
For every $f\in H^1(\R^N)\cap L^1(\R^N)$,  
\[
\|f\|_{L^2}^{2+4/N}\leq C_{\rm Nash}\|f\|_{L^1}^{4/N}\|f'\|_{L^2}^2,
\]
where $C_{\rm Nash}$ is the best possible constant given by 
\[
C_{\rm Nash}
=\frac{N+2}{N \lambda_N}\left(\frac{N+2}{2\omega_N}\right)^{2/N},
\]
where $\omega_N$ is volume of unit ball $B_N(0,1)$ in $\R^N$ 
and $\lambda_{N}$ is the smallest positive eigenvalue 
of the negative Laplacian with the Neumann boundary condition in $L_{\rm rad}^2(B_N(0,1))$.
\end{lemma}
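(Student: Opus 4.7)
The plan is to follow the strategy of Carlen--Loss, reducing the minimization of the scale-invariant ratio
\[
Q(f)=\frac{\|\nabla f\|_{L^2}^2\,\|f\|_{L^1}^{4/N}}{\|f\|_{L^2}^{2+4/N}}
\]
to an explicit eigenvalue problem on a ball. First I would exploit scaling and symmetry: the ratio is invariant under $f(x)\mapsto \mu f(\lambda x)$, so we may impose two normalizations (e.g.\ $\|f\|_{L^1}=\|f\|_{L^2}=1$). Next, by the Pólya--Szeg\H{o} inequality $\|\nabla f^*\|_{L^2}\leq \|\nabla f\|_{L^2}$ together with equimeasurability of rearrangement, $Q(f^*)\leq Q(f)$ for the symmetric decreasing rearrangement $f^*$, so it suffices to minimize over nonnegative, radially symmetric, nonincreasing functions.

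Second, I would establish the existence of a minimizer $f^{\star}$. A minimizing sequence of radial decreasing functions with fixed $L^{1}$ and $L^{2}$ norms is tight (radial decreasing $L^{1}$-functions satisfy $f(|x|)\leq C|x|^{-N}\|f\|_{L^{1}}$) and bounded in $H^{1}$, so one extracts a weak $H^{1}$ limit that is strongly convergent in $L^{p}$ on balls; tightness plus the pointwise decay prevent loss of mass at infinity, and lower semicontinuity of the Dirichlet norm gives a minimizer.

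Third, I would derive the Euler--Lagrange equation. Variation against $L^{1}$ and $L^{2}$ constraints yields, on the support of $f^{\star}$,
\[
-\Delta f^{\star}=\alpha f^{\star}-\beta,\qquad \alpha,\beta>0,
\]
with Neumann (free-boundary) conditions on $\partial\{f^{\star}>0\}$. Because $f^{\star}$ is radial and nonincreasing, its support is a ball $B_{N}(0,R)$; after the scaling that fixes $R=1$, the function $g:=f^{\star}+\beta/\alpha$ satisfies $-\Delta g=\alpha g$ on $B_{N}(0,1)$ with Neumann condition on the sphere, and $g$ is the first nontrivial radial Neumann eigenfunction, so $\alpha=\lambda_{N}$. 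Imposing continuous extension by zero (so $f^{\star}$ vanishes on $\partial B_{N}(0,1)$) fixes $\beta/\alpha=g(\partial B_{N}(0,1))$, and the minimizer is $f^{\star}=(g-g(\partial B_{N}(0,1)))_{+}$ up to the allowed scalings.

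Finally, I would evaluate $Q(f^{\star})$ using the eigenvalue equation: multiplying $-\Delta f^{\star}=\lambda_{N} f^{\star}-\beta$ by $f^{\star}$ and integrating gives $\|\nabla f^{\star}\|_{L^{2}}^{2}=\lambda_{N}\|f^{\star}\|_{L^{2}}^{2}-\beta\|f^{\star}\|_{L^{1}}$, while integrating the equation itself yields $\beta\omega_{N}=\lambda_{N}\|f^{\star}\|_{L^{1}}$ (the boundary term vanishes by the Neumann condition). Eliminating $\beta$ and rewriting with the normalization produces
\[
Q(f^{\star})=\frac{N+2}{N\lambda_{N}}\Bigl(\frac{N+2}{2\omega_{N}}\Bigr)^{2/N},
\]
which is the claimed $C_{\mathrm{Nash}}$. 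I expect the main obstacle to be the existence and regularity of the minimizer together with the identification of its free boundary: ensuring that the minimizing sequence does not lose compactness, that $f^{\star}$ is Lipschitz up to the free boundary $\{f^{\star}=0\}$, and that this boundary is exactly a sphere (so that the Neumann condition arises). Once the minimizer is pinned down as the truncated first Neumann eigenfunction, the computation of the constant is straightforward.
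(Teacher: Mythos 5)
The paper does not actually prove this lemma: it is quoted from Carlen--Loss, and the only argument supplied in the text is the remark immediately following it, which gives a three-line elementary proof of the \emph{non-sharp} one-dimensional case, namely $\|f\|_{L^2}^{8}\leq \|f\|_{L^1}^{4}\|f\|_{L^{\infty}}^{4}\leq \|f\|_{L^1}^{4}\|f'\|_{L^2}^{2}\|f\|_{L^2}^{2}$ via the pointwise bound $\|f\|_{L^\infty}^2\leq\|f'\|_{L^2}\|f\|_{L^2}$ and H\"older. Since the sharp value of $C_{\rm Nash}$ is never used downstream (only the existence of \emph{some} constant enters Lemma \ref{lem:nash-pre} and its consequences), that remark is all the paper needs. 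Your proposal instead sketches the genuine sharp-constant proof by the variational route (P\'olya--Szeg\H{o} reduction, existence of a radial decreasing minimizer, Euler--Lagrange equation, identification of the extremal as a truncated radial Neumann eigenfunction), and the closing computation --- integrating the Euler--Lagrange equation to get $\beta\omega_N=\lambda_N\|f^{\star}\|_{L^1}$ --- is on target, apart from a sign slip: from $-\Delta f^{\star}=\alpha f^{\star}-\beta$ the eigenfunction is $g=f^{\star}-\beta/\alpha$, not $f^{\star}+\beta/\alpha$. As a proof, however, it is incomplete exactly where you flag it: existence and regularity of the minimizer, the fact that the free boundary is a sphere on which the normal derivative vanishes, the identification of $\alpha$ with the \emph{first} nonzero radial Neumann eigenvalue, and the final elimination step producing the displayed value of $C_{\rm Nash}$ are all asserted rather than carried out. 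It is worth knowing that Carlen--Loss sidestep the existence question entirely: after rearrangement they prove the inequality directly by combining the Neumann--Poincar\'e inequality on $B_N(0,R)$ (restricted to radial functions, which is where $\lambda_N$ enters) with the pointwise tail bound $f^{*}(R)\leq\|f\|_{L^1}/(\omega_NR^{N})$, and then optimize over $R$; the truncated eigenfunction appears only at the end to show the constant cannot be improved. For the purposes of this paper the elementary remark is the right proof; if you want the sharp constant you should either complete the free-boundary analysis or switch to that direct argument.
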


\begin{remark}
If we do not care about the best possible constant, 
the one-dimensional case has an elementally proof. If $f\in C_0^1(\R)$, then 
it is easy to check
\begin{align*}
|f(x)|^2
\leq 2\min\left\{\int_{-\infty}^xf'f\,dy,\int_{x}^\infty f'f\,dy\right\}
\leq \int_{-\infty}^\infty|f'||f|\,dy
\leq \|f'\|_{L^2}\|f\|_{L^2}. 
\end{align*}
Combining the above inequality with the H\"older inequality, we have
\[
\|f\|_{L^2}^8
\leq \|f\|_{L^1}^{4}\|f\|_{L^\infty}^{4}
\leq \|f\|_{L^1}^{4}\|f'\|_{L^2}^{2}\|f\|_{L^2}^{2}.
\]
\end{remark}
We also employ the abstract Matsumura estimates 
(essentially) proved in Radu--Todorova--Yordanov \cite{RaToYo2011}. 
To apply their result to our situation, 
we also introduce 
the abstract Cauchy problem of the second-order 
differential equation 
in a Hilbert space $H$ 
governed by a nonnegative selfadjoint operator $A$:
\begin{align}
\label{eq:abst}
\begin{cases}
z''(t)+Az(t)+z'(t)=0, \quad t>0,
\\
(z,z')(0)=(0,g), \quad g\in H.
\end{cases}
\end{align}
The following lemma is the abstract version of 
the statement for diffusion phenomena. 

\begin{lemma}[{Radu--Todorova--Yordanov \cite{RaToYo2011}}]\label{lem:matsumura-abst}
Let $\Theta(\cdot)g$ be the unique solution of \eqref{eq:abst}. 
Then the following assertions hold:
\begin{itemize}
\item[\bf (i)]
There exists a positive constant $C_{{\rm M},1}$ such that 
for every $g\in H$ and $t\geq 1$, 
\begin{align*}
\big\|\Theta(t)g-e^{-tA}g\big\|_{H}
\leq 
C_{{\rm M},1}\Big(
t^{-1}\|e^{-\frac{t}{2}A}g\|_{H}
+e^{-t/16}\|(A^{1/2}+1)^{-1}g\|_H
\Big).
\end{align*}
\item[\bf (ii)]
There exists a positive constant $C_{{\rm M},2}$ such that 
for every $g\in H$ and $t\geq 1$, 
\begin{align*}
\big\|A^{1/2} (\Theta(t)g-e^{-tA}g)\big\|_{H}
\leq 
C_{{\rm M},2}\Big(
t^{-3/2}\|e^{-\frac{t}{2}A}g\|_{H}
+e^{-t/16}\|(A^{1/2}+1)^{-1}A^{1/2}g\|_H
\Big).
\end{align*}
\item[\bf (iii)]
There exists a positive constant $C_{{\rm M},3}$ such that 
for every $g\in H$ and $t\geq 1$, 
\begin{align*}
\left\|\frac{d}{dt}\big(\Theta(t)g-e^{-tA}g\big)\right\|_{H}
\leq 
C_{{\rm M},3}t^{-2}\Big(
\|g\|_{H}
+e^{-t/4}\|g\|_H
\Big).
\end{align*}
\end{itemize}
\end{lemma}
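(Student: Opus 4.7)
The plan is to exploit the spectral calculus for the nonnegative selfadjoint operator $A$ to reduce all three inequalities to explicit one-variable ODE estimates. Writing $A = \int_0^\infty \lambda\, dE_\lambda$, the unique solution of \eqref{eq:abst} admits the representation
\[
\Theta(t)g = \int_0^\infty K(t,\lambda)\, dE_\lambda g,
\]
where $K(\cdot,\lambda)$ solves the parameter-dependent scalar problem $K'' + K' + \lambda K = 0$ with $K(0,\lambda) = 0$ and $\partial_t K(0,\lambda) = 1$. Since $e^{-tA}g = \int_0^\infty e^{-\lambda t}\, dE_\lambda g$, the Plancherel identity
\[
\|\Theta(t)g - e^{-tA}g\|_H^2 = \int_0^\infty |R(t,\lambda)|^2\, d\|E_\lambda g\|_H^2, \qquad R(t,\lambda) := K(t,\lambda) - e^{-\lambda t},
\]
together with the analogous identities obtained by inserting $\sqrt{\lambda}$ or $\partial_t$ into the integrand, reduces (i)--(iii) to uniform pointwise bounds of the schematic shape $|R|,\,\sqrt{\lambda}|R|,\,|\partial_t R| \le Ct^{-\alpha}e^{-\lambda t/2} + Ce^{-t/16}(\sqrt{\lambda}+1)^{-\beta}$ with $(\alpha,\beta)=(1,1),(3/2,1),(2,0)$ respectively; integrating against the spectral measure then yields the prescribed norms $\|e^{-tA/2}g\|_H$ and $\|(A^{1/2}+1)^{-\beta/2}g\|_H$ on the right-hand side.

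The scalar ODE has characteristic roots $\mu_\pm(\lambda) = \tfrac{1}{2}(-1 \pm \sqrt{1-4\lambda})$, so
\[
K(t,\lambda) = \frac{e^{\mu_+(\lambda)t} - e^{\mu_-(\lambda)t}}{\mu_+(\lambda) - \mu_-(\lambda)} \quad (\lambda \neq 1/4), \qquad K(t,\lambda) = \frac{e^{-t/2}\sin(t\sqrt{\lambda - 1/4})}{\sqrt{\lambda - 1/4}} \quad (\lambda > 1/4).
\]
I would split the spectrum at some $\lambda_* \in (0, 1/4)$. On the low-frequency band $\lambda \le \lambda_*$, Taylor expansion yields $\mu_+(\lambda) = -\lambda + O(\lambda^2)$ and $\mu_-(\lambda) = -1 + O(\lambda)$, and a direct comparison gives $|R(t,\lambda)| \le C\lambda e^{-\lambda t} + C\lambda^2 t\, e^{-\lambda t} + Ce^{-t/2}$; using the elementary estimate $\lambda^m e^{-\lambda t/2} \le C_m t^{-m}$ (from the boundedness of $s^m e^{-s/2}$) and writing $e^{-\lambda t}=e^{-\lambda t/2}\cdot e^{-\lambda t/2}$ converts both leading terms into $Ct^{-1}e^{-\lambda t/2}$; inserting an extra $\sqrt{\lambda}$ or differentiating in $t$ produces the improved $t^{-3/2}$ and $t^{-2}$ rates required by (ii) and (iii). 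On the high-frequency band $\lambda > \lambda_*$, the sine formula yields the uniform bounds $|K(t,\lambda)| \le Ce^{-t/2}(\sqrt{\lambda}+1)^{-1}$ and $|\partial_t K(t,\lambda)| \le Ce^{-t/2}$, while $e^{-\lambda t} \le e^{-\lambda t/2}\cdot e^{-\lambda t/2} \le Ce^{-t/16}(\sqrt{\lambda}+1)^{-1}$ when $\lambda_* \ge 1/16$; splitting $e^{-t/2} = e^{-t/16}\cdot e^{-7t/16}$ and absorbing the excess exponential into a factor $(\sqrt{\lambda}+1)^{-1}$ produces the second contribution in each estimate.

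The main obstacle is the transitional region $\lambda \approx 1/4$, where the two characteristic roots coalesce and the quotient formula for $K(t,\lambda)$ is apparently singular. Uniform constants across this threshold require either a contour-integral representation of $R(t,\lambda)$ in the $\lambda$ variable, or the observation that $\lambda \mapsto K(t,\lambda)$ extends as an entire function (the apparent singularity cancels between numerator and denominator via the identity $K(t,\lambda) = e^{-t/2}\,\phi(t\sqrt{1/4-\lambda}/1)$ with $\phi(z) = \sinh(z)/z$ entire). A secondary delicate point is the precise exponential rate $e^{-t/16}$: it comes from a careful choice of $\lambda_*$ so that both $e^{-\lambda_* t/2}$ and $e^{-t/2}$ dominate $e^{-t/16}$ while leaving enough of the exponential decay to absorb the weight $(\sqrt{\lambda}+1)^{-1}$ uniformly in $\lambda > \lambda_*$. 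Once the pointwise scalar bounds hold uniformly in $\lambda \ge 0$, integrating against $d\|E_\lambda g\|_H^2$ delivers (i)--(iii).
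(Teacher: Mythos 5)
The paper does not actually prove this lemma --- it is imported verbatim from Radu--Todorova--Yordanov \cite{RaToYo2011} --- so there is no internal proof to compare against, and your proposal is in effect a self-contained reconstruction. The strategy is correct and is essentially the classical one (and close in spirit to the argument in the cited reference): represent $\Theta(t)g=\int_0^\infty K(t,\lambda)\,dE_\lambda g$ with $K''+K'+\lambda K=0$, $K(0,\lambda)=0$, $\pa_tK(0,\lambda)=1$, and reduce {\bf (i)}--{\bf (iii)} to uniform multiplier bounds on $R=K-e^{-\lambda t}$, split at a threshold $\lambda_*<1/4$. Your low-frequency computation is sound: with $\mu_\pm=-\tfrac12\pm\sqrt{\tfrac14-\lambda}$ one has $\mu_++\lambda=-\tfrac14\big(1-\sqrt{1-4\lambda}\big)^2=O(\lambda^2)$, which is exactly what converts the terms $\lambda e^{-\lambda t}$ and $\lambda^2te^{-\lambda t}$ into $t^{-1}e^{-\lambda t/2}$, the extra factor $\sqrt\lambda$ or the time derivative upgrading the rate to $t^{-3/2}$ and $t^{-2}$ as required; and you correctly identify and defuse the only genuine obstruction, the coalescence of the characteristic roots at $\lambda=1/4$, via the entire function $\sinh z/z$. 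Two small corrections are needed. First, the uniform formula is $K(t,\lambda)=t\,e^{-t/2}\,\phi\big(t\sqrt{1/4-\lambda}\,\big)$ with $\phi(z)=\sinh z/z$; you dropped the factor $t$, which matters because it is precisely what yields the harmless bound $te^{-t/2}\le Ce^{-t/4}$ in the transitional band. Second, to literally obtain the rate $e^{-t/16}$ from $e^{-\lambda t}=e^{-\lambda t/2}e^{-\lambda t/2}$ on the high-frequency band you need $\lambda_*\ge 1/8$ rather than $1/16$ (for instance $\lambda_*=3/16$ works, keeping $\lambda_*<1/4$; in any case the precise exponential rate is immaterial for the applications in Section \ref{sec:damped}). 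With these details filled in, integrating the pointwise scalar bounds against $d\|E_\lambda g\|_H^2$ delivers all three estimates, so the proposal constitutes a valid proof of the quoted lemma.
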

\section{A variant of Hardy-Rellich and Nash inequality}
\label{sec:hardy-rellich-nash}
\subsection{A variant of Hardy-Rellich inequality for $S$}
The following is a kind of Hardy inequality in one-dimension 
which is valid only if the Schr\"odinger operator 
has a non-negative (non-zero) potential.
\begin{lemma}\label{lem:hardy:V}
Assume that $V\in \mathcal{V}$. 
Let the pair $(\psi_1,\psi_2)$ be given in Lemma \ref{lem:ode:V}. 
Then the following assertions hold:
\begin{itemize}
\item[\bf (i)]
For every $f\in D(S^{1/2})=H^1(\R)$,
\[
\left\|\frac{f}{\psi_1\psi_2}\right\|_{L^2}\leq 
C_{1,1}
\|S^{1/2}f\|_{L^2}, 
\quad 
C_{1,1}=\frac{2}{k_V}.
\]
\item[\bf (ii)]
For every $\beta\in [0,1)$, 
and $f\in D(S)=H^2(\R)$, 
\begin{align*}
\left\|\frac{f}{(\psi_1\psi_2)^{1+\beta/2}}\right\|_{L^2}
\leq 
C_{1,2}
\|Sf\|_{L^2}^{(2+\beta)/4}
\|f\|_{L^2}^{(2-\beta)/4}, 
\quad 
C_{1,2}=\left(\frac{4}{(1-\beta^2)k_V^2}\right)^{(2+\beta)/4}.
\end{align*}
\item[\bf (iii)]
For every $\beta\in [0,1)$ and $f\in D(S)$, 
\[
\int_{\R}\frac{|f'|^2+Vf^2}{(\psi_1\psi_2)^{\beta}}\,dx
\leq 
C_{1,3}\|Sf\|_{L^2}^{1+\beta/2}
\|f\|_{L^2}^{1-\beta/2}, 
\quad 
C_{1,3}=\frac{2-\beta}{2(1-\beta)}
\left(\frac{4}{(1-\beta^2)k_V^2}\right)^{\beta/2}.
\]
\end{itemize}
\end{lemma}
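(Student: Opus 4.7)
I would prove the three inequalities in the order (i), then an intermediate weighted Hardy inequality, then (iii), then (ii), setting $\phi := \psi_1\psi_2$ throughout. For part (i), perform the ground state transformation $f = \psi_1 g$; since $-\psi_1'' + V\psi_1 = 0$, the right-hand side of the inequality becomes $\int_\R \psi_1^2 (g')^2\,dx$. The change of variables $y = -\psi_2/\psi_1$, whose derivative equals $k_V/\psi_1^2$ by the Wronskian relation, sends $\R$ bijectively onto $(-\infty,0)$ and satisfies $\psi_1\psi_2 = -y\psi_1^2$. Pushed forward, the inequality reduces precisely to the classical one-dimensional Hardy inequality $\int_0^\infty g^2/y^2\,dy \le 4\int_0^\infty (g_y)^2\,dy$, which produces the sharp constant $C_{1,1} = 2/k_V$.

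\textbf{Key identities and a weighted Hardy inequality.} Two algebraic identities for $\phi$ drive parts (ii) and (iii): $\phi'' = 2V\phi + 2\psi_1'\psi_2'$, obtained by differentiating the two harmonic equations, and $(\phi')^2 = k_V^2 + 4\phi\,\psi_1'\psi_2'$, obtained by squaring the Wronskian relation. Because $\psi_1$ is increasing and $\psi_2$ decreasing, $\psi_1'\psi_2' \le 0$ pointwise, and the second identity forces the crucial pointwise bound $-\psi_1'\psi_2' \le k_V^2/(4\phi)$. I would use these to prove an intermediate weighted Hardy inequality of the form $\|f/\phi^{1+\beta/2}\|_{L^2}^2 \le C(\beta)\int_\R (|f'|^2+Vf^2)/\phi^\beta\,dx$ by applying part (i) to $F = f/\phi^{\beta/2}$ and expanding $\int(|F'|^2 + VF^2)\,dx$ back in $f$ through integration by parts; the residual $\int f^2/\phi^{\beta+2}$ term that the expansion produces is absorbed via the pointwise bound on $\psi_1'\psi_2'$.

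\textbf{Parts (iii), (ii), and the main obstacle.} For (iii), integrate by parts to obtain $\int(|f'|^2+Vf^2)/\phi^\beta\,dx = \int f(Sf)/\phi^\beta\,dx + \beta\int f f'\phi'/\phi^{\beta+1}\,dx$; a second integration by parts on the last integral, together with the identities for $\phi''$ and $(\phi')^2$, reduces it to a combination of $\int Vf^2/\phi^\beta$, $\int f^2\psi_1'\psi_2'/\phi^{\beta+1}$ (nonpositive, to be discarded), and $\int f^2/\phi^{\beta+2}$ (absorbed using the weighted Hardy). I would then bound $\int f(Sf)/\phi^\beta$ by Cauchy--Schwarz, control $\|f/\phi^\beta\|_{L^2}$ via the H\"older interpolation $\|f/\phi^\beta\|_{L^2} \le \|f\|_{L^2}^{1-\beta}\|f/\phi\|_{L^2}^\beta$, and invoke part (i) along with $\|S^{1/2}f\|_{L^2}^2 \le \|Sf\|_{L^2}\|f\|_{L^2}$ to produce the exponents $\|Sf\|^{1+\beta/2}\|f\|^{1-\beta/2}$. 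Part (ii) then follows from the weighted Hardy inequality combined with (iii). The main obstacle is the mild circularity between (ii) and (iii) coupled with precise bookkeeping of constants: the form $(4/((1-\beta^2)k_V^2))^{\cdot}$ requires tracking coefficients exactly through the two integration-by-parts expansions, and the $(1-\beta)^{-1}$ singularity as $\beta \to 1$ reflects the critical Hardy exponent that appears when the weight $\phi^{-(1+\beta/2)}$ approaches the borderline of integrability.
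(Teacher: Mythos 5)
Your part (i) is correct and takes a genuinely different route from the paper: you reduce to the classical half-line Hardy inequality via the substitution $y=-\psi_2/\psi_1$ (whose derivative is $k_V/\psi_1^2$), whereas the paper uses the superharmonic weight $\psi_*=(\psi_1\psi_2)^{1/2}$, for which $\mathcal{S}\psi_*=k_V^2/(4\psi_*^3)$, together with the ground-state substitution $g=f/\psi_*$; both yield the sharp constant $2/k_V$. Your two identities for $\phi=\psi_1\psi_2$ are exactly the paper's facts $\mathcal{S}\psi_*=k_V^2/(4\psi_*^{3})$ and $\|\psi_*\psi_*'\|_{L^\infty}=k_V/2$ in disguise, so that part of the toolkit is sound.

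The gap is in running (iii) before (ii), specifically in the absorption step of (iii). The second integration by parts leaves the positive term $\tfrac{\beta(\beta+1)k_V^2}{2}\int f^2\phi^{-\beta-2}\,dx$ (the paper gets the same coefficient). Your intermediate weighted Hardy inequality, obtained by applying (i) to $F=f\phi^{-\beta/2}$ and using the pointwise bounds $\psi_1'\psi_2'\le 0$ and $-\psi_1'\psi_2'\le k_V^2/(4\phi)$, gives at best
\[
\int_{\R}\frac{f^2}{\phi^{\beta+2}}\,dx\le \frac{4}{(1+\beta)k_V^2}\int_{\R}\frac{|f'|^2+(1+\beta)Vf^2}{\phi^{\beta}}\,dx,
\]
because the crude bound on $-\psi_1'\psi_2'$ (which is attained wherever $\phi'=0$) cancels most of the helpful $-\tfrac{\beta(\beta+2)k_V^2}{4}\int f^2\phi^{-\beta-2}$ contribution. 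The absorption coefficient is then $\tfrac{\beta(\beta+1)k_V^2}{2}\cdot\tfrac{4}{(1+\beta)k_V^2}=2\beta$, which reaches $1$ at $\beta=1/2$; so the left-hand side cannot be absorbed and your argument proves (iii), and hence (ii), only for $\beta<1/2$ (or $\beta\lesssim 0.37$ with looser bookkeeping). You would need the sharp constant $4/((1+\beta)^2k_V^2)$ to get the absorption factor $2\beta/(1+\beta)<1$, and your pointwise derivation does not produce it. The fix is the paper's ordering: prove (ii) first by bounding $\int (Sf)f\,\phi^{-\beta}\,dx$ from below by $(1-\beta^2)\tfrac{k_V^2}{4}\|f\phi^{-1-\beta/2}\|_{L^2}^2$ --- there the only absorption is between two copies of $\|f\phi^{-1-\beta/2}\|_{L^2}^2$ with ratio $\beta^2<1$, valid for all $\beta<1$ --- and from above by Cauchy--Schwarz plus the interpolation $\|f\phi^{-\beta}\|_{L^2}\le\|f\|_{L^2}^{(2-\beta)/(2+\beta)}\|f\phi^{-1-\beta/2}\|_{L^2}^{2\beta/(2+\beta)}$, then solve for $\|f\phi^{-1-\beta/2}\|_{L^2}$; afterwards (iii) bounds $\int f^2\phi^{-\beta-2}\,dx$ directly through (ii), with no absorption against the weighted Dirichlet form at all.
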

\begin{proof}
Set
\[
\psi_*(x)=\psi_1(x)^{1/2}\psi_2(x)^{1/2}, \quad x\in\R.
\]
Then 
by a direct calculation we see that $\psi_*$ is a positive super-harmonic function with respect to 
$\mathcal{S}$ with 
\begin{equation}\label{eq:psi_*}
\mathcal{S}\psi_*
=\frac{(\psi_1'\psi_2-\psi_1\psi_2')^2}{4(\psi_1\psi_2)^{3/2}}
=\frac{k_V^2}{4\psi_*^3}>0.
\end{equation}
{\bf (i)}\ 
By density, it suffices to show the validity of the inequality 
for $f\in C_0^1(\R)$. 
Putting $g=\psi_*^{-1}f\in C_0^1(\R)$,
we see from integration by parts that 
\begin{align*}
\int_\R |f'|^2+Vf^2\,dx
&=
\int_\R (|g'|^2+Vg^2)\psi_*^2\,dx
+
2\int_\R gg'\psi_*\psi_*'\,dx
+
\int_\R |\psi_*'|^2g^2\,dx
\\
&=
\int_\R |g'|^2\psi_*^2\,dx
+\int_\R g^2\psi_*(\mathcal{S}\psi_*)\,dx.
\end{align*}
By using \eqref{eq:psi_*}, 
we deduce the desired inequality.

{\bf (ii)}
It is enough to 
consider the case 
$f\in C_0^2(\R)$. 
Putting $g_\beta=\psi_*^{-\beta}f$, 
we see form 
integration by parts that 
\begin{align*}
\int_{\R}\frac{(Sf)f}{\psi_*^{2\beta}}\,dx
&=
-\int_{\R}(\psi_*^\beta g_\beta)''\psi_*^{-\beta}g_\beta\,dx
+
\int_{\R}Vg_\beta^2\,dx
\\
&=
\int_{\R}(\psi_*^\beta g_\beta)'(\psi_*^{-\beta}g_\beta)'\,dx
+
\int_{\R}Vg_\beta^2\,dx
\\
&=
\int_{\R}|g_\beta'|^2+Vg_\beta^2\,dx
-\beta^{2}\int_{\R}g_\beta^2\frac{|\psi_*'|^2}{\psi_*^2}\,dx.
\end{align*}
Using {\bf (i)} and $\|\psi_*\psi_*'\|_{L^\infty}=\frac{1}{2}\|\psi_1'\psi_2+\psi_1\psi_2'\|_{L^\infty}=k_V/2$, we deduce
\[
\int_{\R}\frac{(Sf)f}{\psi_*^{2\beta}}\,dx
\geq 
\frac{k_V^2}{4}
\int_{\R}\frac{g_\beta^2}{\psi_*^4}\,dx
-\beta^{2}\int_{\R}g_\beta^2\frac{|\psi_*'|^2}{\psi_*^2}\,dx
\geq 
(1-\beta^2)
\frac{k_V^2}{4}
\int_{\R}\frac{g_\beta^2}{\psi_*^4}\,dx.
\]
By the H\"older inequality, we conclude that
\begin{align*}
(1-\beta^2)
\frac{k_V^2}{4}\|\psi_*^{-2-\beta}f\|_{L^2}^2
&\leq 
\|Sf\|_{L^2}\|\psi_{*}^{-2\beta}f\|_{L^2}
\\
&\leq 
\|Sf\|_{L^2}
\|f\|_{L^2}^{\frac{2-\beta}{2+\beta}}
\|\psi_{*}^{-2-\beta}f\|_{L^2}^{\frac{2\beta}{2+\beta}}.
\end{align*}
which provides the desired inequality. 

{\bf (iii)}
By integration by parts and 
$\|\psi_*\psi_*'\|_{L^\infty}=k_V/2$, 
\rev{t}
he second inequality can be deduce as follows:
\begin{align*}
\int_{\R}\frac{|f'|^2+(1+\beta)Vf^2}{\psi_*^{2\beta}}\,dx
&=
\int_{\R}\frac{-f''f+(1+\beta)Vf^2}{\psi_*^{2\beta}}\,dx
+2\beta
\int_{\R}\frac{f'f\psi_*'}{\psi_*^{2\beta+1}}\,dx
\\
&=
\int_{\R}\frac{(Sf)f}{\psi_*^{2\beta}}\,dx
+\beta
\int_{\R}\frac{f^2\mathcal{S}\psi_*}{\psi_*^{2\beta+1}}\,dx
+\beta(2\beta+1)
\int_{\R}\frac{f^2|\psi_*'|^2}{\psi_*^{2\beta+2}}\,dx
\\
&\leq 
\|S_Vf\|_{L^2}
\|f\|_{L^2}^{\frac{2-\beta}{2+\beta}}
\|\psi_{*}^{-2-\beta}f\|_{L^2}^{\frac{2\beta}{2+\beta}}
+\frac{\beta(\beta+1)k_V^2}{2}
\|\psi_{*}^{-2-\beta}f\|_{L^2}^2.
\end{align*}
Combining the above inequality with {\bf (ii)}, we obtain the desired inequality.
The proof is complete.
\end{proof}

\begin{remark}
The inequality in Lemma \ref{lem:hardy:V} {\bf (ii)} with $\beta=1$ is false. 
In this situation we can check that
$\psi_*^2=\psi_1\psi_2$ and $\psi=\psi_1+\psi_2$ (and also $\lr{x}$)
are comparable:
\[
0<
\inf_{x\in\R}\left(\frac{\psi(x)}{\psi_*(x)^2}\right)
\leq
\sup_{x\in\R}\left(\frac{\psi(x)}{\psi_*(x)^2}\right)
<+\infty.
\]
Taking $\zeta\in C_0^\infty(\R)$ with 
${\rm supp}\,\zeta=[1,2]$ and
\[
f_n(x)=\psi(x)\zeta\left(\frac{\psi(x)^{1/2}}{n}\right)\quad (n\in \N),
\]
we can check that 
$\|\psi_*^{-3}f_n\|_{L^2}\approx (\log n)^{1/2}$, 
$\|f_n\|_{L^2}\approx n^3$ and 
$\|Sf_n\|_{L^2}\approx n^{-1}$ for sufficiently large $n$. Therefore the inequality 
with $\beta=1$ is never valid. 
\end{remark}
\subsection{A variant of Nash inequality for $S$}

Now we define a positive harmonic function 
$\psi_V$ with respect to $\mathcal{S}$ 
and the corresponding weighted measure $d\mu_V$ as follows:
\begin{equation}\label{def:mu_V}
\psi_V(x)=\psi_1(x)+\psi_2(x), \quad d\mu_V=\psi_V(x)\,dx.
\end{equation}
Then $d\mu_V$ forms an invariant measure for the 
Schr\"odinger semigroup $e^{-tS}$. 
It is formally seen in the following computation 
\[
\frac{d}{dt}
\int_{\R}e^{-tS}f\,d\mu_V
=
-\int_{\R}S(e^{-tS}f)\psi_V\,dx
=
-\int_{\R}(e^{-tS}f)\mathcal{S}\psi_V\,dx=0.
\]
Although it is valid for every $f\in L^2(\R)\cap L^1(\R,d\mu_V)$, 
to reach our goal, we only use the following fact 
describing a weighted $L^1$-contraction property. 
The proof of the invariant measure  is omitted.
\begin{lemma}\label{lem:C_M-inv}
If $f\in L^2(\R)\cap L^1(\R,d\mu_V)$, 
then one has 
for every $t\geq0$, 
\begin{gather}
\|\psi_V(e^{-tS}f)\|_{L^1}\leq\|\psi_V f\|_{L^1}.
\end{gather}
\end{lemma}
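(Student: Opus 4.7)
The plan is to derive the contraction by differentiating a smooth approximation of $t\mapsto\|\psi_V u(t)\|_{L^1}$, where $u(t)=e^{-tS}f$, and exploiting the harmonicity $\mathcal{S}\psi_V=0$ of the weight $\psi_V$. Since $\psi_V>0$, the claim reads $\int_\R|u(t)|\psi_V\,dx\le\int_\R|f|\psi_V\,dx$. By density of $C_c^\infty(\R)$ in both $L^2(\R)$ and $L^1(\R,d\mu_V)$, I first reduce to initial data $f$ for which $u(t,\cdot)$ and $\pa_x u(t,\cdot)$ decay faster than any polynomial as $|x|\to\infty$ for each $t>0$. Such decay follows from the Feynman--Kac/Trotter-type domination $|e^{-tS}f|\le e^{t\pa_x^2}|f|$, valid since $V\ge 0$, which transfers the Gaussian decay of the free heat semigroup to $e^{-tS}$; derivative bounds then follow from standard interpolation or a Duhamel argument.

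For the core computation take the convex smooth approximation $\phi_\ep(s)=\sqrt{s^2+\ep^2}-\ep$, which satisfies $0\le\phi_\ep(s)\uparrow|s|$ as $\ep\downarrow 0$, $|\phi_\ep'|\le 1$, $\phi_\ep''\ge 0$, and the pointwise bound
\[
\phi_\ep(s)-s\phi_\ep'(s)=\frac{\ep\bigl(\ep-\sqrt{s^2+\ep^2}\bigr)}{\sqrt{s^2+\ep^2}}\le 0.
\]
Using $\pa_t u = \pa_x^2 u - Vu$ and integrating by parts twice (boundary terms at $\pm\infty$ vanish thanks to the decay from the previous step and the at-most-linear growth of $\psi_V$ provided by Lemma \ref{lem:ode:V}), one obtains
\[
\frac{d}{dt}\int_\R\phi_\ep(u)\psi_V\,dx=-\int_\R\phi_\ep''(u)|\pa_x u|^2\psi_V\,dx+\int_\R\phi_\ep(u)\psi_V''\,dx-\int_\R V\phi_\ep'(u)u\,\psi_V\,dx.
\]
The crucial cancellation is $\psi_V''=V\psi_V$, which is nothing but $\mathcal{S}\psi_V=0$; after substituting, the last two integrals combine into $\int_\R V[\phi_\ep(u)-u\phi_\ep'(u)]\psi_V\,dx$, which is non-positive since $V\ge 0$ and $\phi_\ep(s)\le s\phi_\ep'(s)$. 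The remaining integral is also non-positive by convexity of $\phi_\ep$. Hence $t\mapsto\int_\R\phi_\ep(u(t))\psi_V\,dx$ is non-increasing.

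Letting $\ep\downarrow 0$ and applying monotone convergence yields $\|\psi_V u(t)\|_{L^1}\le\|\psi_V f\|_{L^1}$ for $f$ in the dense class; once this contraction is established there, a routine approximation extends it to all $f\in L^2(\R)\cap L^1(\R,d\mu_V)$. The main obstacle is the rigorous vanishing of the boundary terms at spatial infinity, since $\psi_V$ grows like $|x|$; this is precisely what forces the preliminary pointwise-decay reduction and is the step requiring the most care.
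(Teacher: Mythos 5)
Your proof is correct, but it takes a genuinely different route from the paper. You run the direct dissipation computation: differentiate $t\mapsto\int_\R\phi_\ep(u)\,\psi_V\,dx$ for the convex regularization $\phi_\ep(s)=\sqrt{s^2+\ep^2}-\ep$, integrate by parts twice, and use $\psi_V''=V\psi_V$ together with $\phi_\ep''\ge 0$ and $\phi_\ep(s)-s\phi_\ep'(s)\le 0$ to conclude monotonicity; the algebra checks out, and the technical obstruction you flag (boundary terms against the linearly growing weight $\psi_V$) is real but handled correctly by your reduction to compactly supported data via the domination $|e^{-tS}f|\le e^{t\pa_x^2}|f|$ (valid since $V\ge 0$) and a Duhamel/interpolation bound for $\pa_x u$. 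The paper instead avoids all pointwise decay and boundary-term issues by a purely functional-analytic argument: it shows the weighted $L^\infty$ ball $\mathcal{C}_M^*=\{|f|\le M\psi_V\}$ is invariant under the resolvents $(1+\ep S)^{-1}$ via Stampacchia truncation (which needs only $H^2$ regularity), identifies the weighted $L^1$ ball $\mathcal{C}_m$ as the polar set of $\mathcal{C}_{1/m}^*$, transfers the invariance by the selfadjointness of the resolvent, and passes to the semigroup by the Post--Widder formula. What each buys: the paper's duality route is shorter on justification and generalizes immediately to rougher potentials and weights, since it never touches the heat kernel; your route requires the extra regularity/decay analysis but is more transparent about the mechanism (the harmonicity $\mathcal{S}\psi_V=0$ killing the potential term) and yields the slightly stronger conclusion that $t\mapsto\|\psi_V e^{-tS}f\|_{L^1}$ is monotone non-increasing, not merely bounded by its initial value. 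One small point to make explicit when writing it up: after establishing monotonicity on $(0,\infty)$ you need continuity of $\int_\R\phi_\ep(u(t))\psi_V\,dx$ at $t=0$ (dominated convergence using the uniform-in-$t$ Gaussian envelope from the same domination), and the final extension to general $f\in L^2(\R)\cap L^1(\R,d\mu_V)$ should go through Fatou's lemma applied to an a.e.\ convergent subsequence of $e^{-tS}f_n$.
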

\begin{proof}
Set for $m>0$, 
\[
\mathcal{C}_{m}=\left\{f\in L^2(\R)\;;\;\|\psi_V f\|_{L^1}\leq m \right\}.
\]
Then it suffices to prove that 
for every $m>0$, $\mathcal{C}_m$ is invariant under the flow $e^{-tS}$.

Observe that $\mathcal{C}_m$ is closed in $L^2(\R)$ via Fatou's Lemma.
We set another family of subsets in $L^2(\R)$ as follows:
\begin{align*}
\mathcal{C}_{M}^*
&:=
\left\{f\in L^2(\R)\;;\; |f(x)|\leq M\psi_V(x)\ \text{a.e.\ on}\ \R\right\}.
\end{align*}
We first prove that $\mathcal{C}_{M}^*$ is invariant under the resolvents $\{(1+\ep S)^{-1}\}_{\ep>0}$
 via (a modified) Stampacchia's truncation method. 

Let $f\in \mathcal{C}_M^*$ be arbitrary 
and put $u=(1+\ep S)^{-1}f\in D(S)=H^2(\R)\subset C^1(\R)$. 
Then we fix a nonnegative function 
$\eta\in C^1(\R)$ satisfying $0\leq \eta'\in L^\infty(\R)$ 
and 
\[
\begin{cases}
\eta(s)=0 & \text{if}\ s\leq 0, 
\\
\eta'(s)>0 & \text{if}\ s> 0, 
\end{cases}
\]
and $v=u-M\psi_V\in C^1(\R)$.
Note that 
$u\in L^\infty(\R)$ gives $v(x)\to -\infty$ as $|x|\to \infty$.
Multiplying $\eta(v)\in C^1_0(\R)$ to 
the equality $f-u=\ep\mathcal{S}v$, we have  
\begin{align*}
\int_{\R}(f-u)\eta(v)\,dx
=
\ep\int_{\R}(v')^2\eta'(v)
+Vv\eta(v)\,dx
\geq0.
\end{align*}
Therefore
\[
\int_\R v\eta(v)\,dx
=
\int_\R (u-M\psi_V)\eta(v)\,dx
\leq 
\int_\R (f-M\psi_V)\eta(v)\,dx
\leq 0
\]
which means that $u\leq M\psi_V$ a.e.\ on $\R$.
The other inequality can be verified by 
the same argument with $u$ replaced with $-u$.
This implies $(1+\ep S)^{-1}\mathcal{C}_M^*\subset \mathcal{C}_M^*$. 

Next we show the following representation  
\[
\mathcal{C}_{m}=
\left\{f\in L^2(\R)\;;\;\int_{\R}fg\,dx\leq 1\ (\forall g\in \mathcal{C}_{1/m}^*)\right\}.
\]
If $f\in L^2(\R)$ satisfies $\int_{\R}fg\,dx\leq 1$ for all $g\in\mathcal{C}_{1/m}^*$,
then 
choosing $g_n=\frac{n \psi_V}{m(n|f|+\psi_V)}f\in \mathcal{C}_{1/m}^*$,
we can find that 
\begin{align*}
\int_\R\frac{|f|^2}{|f|+n^{-1}\psi_V}\psi_V\,dx=m\int_\R fg_n\,dx\leq m.
\end{align*}
Letting $n\to \infty$, by the monotone convergence theorem we have $f\in \mathcal{C}_m$. 
Conversely, if $f\in \mathcal{C}_m$,  then 
for every $g\in \mathcal{C}_{1/m}^*$, 
\begin{align*}
\int_{\R}fg\,dx= \int_\R (\psi_V f)(\psi_V^{-1}g)\,dx
\leq 
\frac{1}{m}
\|\psi_V f\|_{L^1}\leq 1.
\end{align*}

To close the proof, we now prove $(1+\ep S)^{-1}\mathcal{C}_m\subset \mathcal{C}_m$ 
which enables us to conclude $e^{-tS}\mathcal{C}_m\subset \mathcal{C}_m$ 
via so-called Post--Widder inversion formula
\[
\lim_{n\to \infty}\left(1+\frac{t}{n}S\right)^{-n}f=e^{-tS}f
\]
(see e.g., Engel--Nagel \cite{ENbook}) with the closedness of $\mathcal{C}_m$.
If $f\in \mathcal{C}_m$, then by 
the selfadjointness of $(1+\ep S)^{-1}$ and
the invariance property $(1+\ep S)^{-1}\mathcal{C}_{1/m}^*\subset \mathcal{C}_{1/m}^*$,
we have
for every $g\in \mathcal{C}_{1/m}^*$, 
\[
\int_\R \big((1+\ep S)^{-1}f\big)g\,dx
=
\int_\R f\big((1+\ep S)^{-1}g\big)\,dx\leq 1
\]
which gives $(1+\ep S)^{-1}f\in \mathcal{C}_m$. The proof is complete.
\end{proof}

Here we state a weighted version of similarities of the Nash inequality
for the Schr\"odinger operator $\mathcal{S}$.
\begin{lemma}\label{lem:nash-pre}
There exists a positive constant $C_{{\rm Nash}}^*$ such that 
for every $f\in D(S^{1/2})\cap L^1(\R,d\mu_V)$, 
\[
\|\psi_V^{2/3} f\|_{L^2}^6
\leq 
C_{{\rm Nash}}^*\|\psi_Vf\|_{L^1}^{4}
\|S^{1/2}f\|_{L^2}^{2}.
\]
\end{lemma}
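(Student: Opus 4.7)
The plan is to combine a one-line H\"older interpolation with a pointwise $L^\infty$ estimate on $\psi_V^{1/3} f$, where the key geometric input is the convexity of the harmonic function $\psi_V$. By density, I may assume $f\in C_c^\infty(\R)$ throughout, the general case following since both sides are continuous in the $H^1\cap L^1(\R,d\mu_V)$ topology (and a lower-semicontinuity argument covers the case when the LHS is defined as a possibly infinite quantity).

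First, writing $\psi_V^{4/3} f^2=(\psi_V f)(\psi_V^{1/3} f)$, an $L^1$--$L^\infty$ H\"older gives at once
\[
\|\psi_V^{2/3} f\|_{L^2}^{2}\;\leq\;\|\psi_V f\|_{L^1}\,\|\psi_V^{1/3} f\|_{L^\infty}.
\]
Hence the lemma reduces to establishing the weighted Sobolev-type bound
\[
\|\psi_V^{1/3} f\|_{L^\infty}^{2}\;\leq\;2\,\|\psi_V^{2/3} f\|_{L^2}\,\|S^{1/2} f\|_{L^2}. \qquad(\dagger)
\]
Indeed, inserting $(\dagger)$ into the H\"older bound, raising to the $4$th power and dividing by $\|\psi_V^{2/3} f\|_{L^2}$ gives $\|\psi_V^{2/3} f\|_{L^2}^{3}\leq 2\|\psi_V f\|_{L^1}^{2}\|S^{1/2} f\|_{L^2}$; squaring yields the target inequality with $C_{\rm Nash}^{*}=4$.

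To prove $(\dagger)$ I use the following structural observation: $\psi_V$ is convex (because $\psi_V''=V\psi_V\geq 0$) and tends to $+\infty$ at both $\pm\infty$ (since $\psi_1(x)\to\infty$ as $x\to+\infty$ and $\psi_2(x)\to\infty$ as $x\to-\infty$ by Lemma \ref{lem:ode:V}). Therefore $\psi_V$ attains a global minimum at some $x_0\in\R$, is decreasing on $(-\infty,x_0]$, and increasing on $[x_0,\infty)$. For each fixed $x\in\R$, choose the one-sided ray $\gamma_x=[x,\infty)$ when $x\geq x_0$ and $\gamma_x=(-\infty,x]$ otherwise; on $\gamma_x$ the monotonicity of $\psi_V$ gives $\psi_V(y)\geq\psi_V(x)$ for every $y\in\gamma_x$. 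The elementary identity $|f(x)|^2\leq 2\int_{\gamma_x}|f||f'|\,dy$, valid for $f\in C_c^\infty$, together with transporting the constant factor $\psi_V(x)^{2/3}$ inside the integral via this monotonicity, yields
\[
\psi_V(x)^{2/3}\,f(x)^{2}\;\leq\;2\int_{\gamma_x}\psi_V(y)^{2/3}\,|f||f'|\,dy\;\leq\;2\int_{\R}\psi_V^{2/3}\,|f||f'|\,dy.
\]
Cauchy--Schwarz bounds the last integral by $\|\psi_V^{2/3} f\|_{L^2}\|f'\|_{L^2}$, and $\|f'\|_{L^2}\leq\|S^{1/2} f\|_{L^2}$ since $\|S^{1/2} f\|_{L^2}^{2}=\|f'\|_{L^2}^{2}+\int_\R Vf^2\,dx$ and $V\geq 0$. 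Taking $\sup_{x}$ gives $(\dagger)$.

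The main conceptual hurdle is recognizing that the convexity of $\psi_V$ permits the weight $\psi_V(x)^{2/3}$ to be absorbed into the pointwise-to-integral passage without any correction term. A naive differentiation $(\psi_V^{1/3} f)^{2}{}'= 2\psi_V^{2/3} ff'+\tfrac{2}{3}\psi_V^{-1/3}\psi_V' f^{2}$ produces an unavoidable extra term $\int\psi_V^{-1/3}|\psi_V'| f^{2}\,dy$ which ultimately demands a bound of $\|f\|_{L^2}^{2}$ by $\|S^{1/2} f\|_{L^2}^{2}$; no such global Hardy inequality exists because $S_0=-d^{2}/dx^{2}$ is critical on $\R$. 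The monotone-ray trick circumvents this obstruction cleanly, and it is exactly at this point that the hypothesis $V\not\equiv 0$ enters (through $\psi_V$ growing at $\pm\infty$).
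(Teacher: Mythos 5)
Your proof is correct, and it takes a genuinely different route from the paper's. The paper decomposes $f$ into pieces $f_R=f\,\zeta(\psi_V/R)$ supported on level sets of $\psi_V$, applies the classical one-dimensional Nash inequality (Lemma \ref{lem:nash}) to each piece, and integrates over $R$ against $R^{1/3}\,dR$; the commutator term produced by differentiating the cutoff is controlled by the Hardy-type inequality of Lemma \ref{lem:hardy:V} (i), which is where the hypothesis $V\not\equiv 0$ (through $k_V>0$) enters. You instead run the elementary proof of Nash's inequality directly in the weighted setting: the H\"older step $\|\psi_V^{2/3}f\|_{L^2}^2\le\|\psi_V f\|_{L^1}\|\psi_V^{1/3}f\|_{L^\infty}$ followed by a weighted Agmon-type sup bound, in which the weight $\psi_V(x)^{2/3}$ is transported into the ray integral using the fact that $\psi_V$ is convex ($\psi_V''=V\psi_V\ge 0$) and diverges at $\pm\infty$ --- again exactly where $V\not\equiv 0$ is used, but via the linear growth of $\psi_V$ (Lemma \ref{lem:ode:V}) rather than via a Hardy inequality. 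The exponent bookkeeping checks out ($X^4\le L^2M^2\le 2L^2XD$ gives $X^6\le 4L^4D^2$), as do the monotone-ray estimate, the bound $\|f'\|_{L^2}\le\|S^{1/2}f\|_{L^2}$, and the closing density/Fatou argument. Your version is more self-contained (it needs neither Lemma \ref{lem:nash} nor Lemma \ref{lem:hardy:V}) and produces the explicit constant $C_{\rm Nash}^*=4$, while the paper's level-set decomposition is the kind of scheme that survives in situations where no global monotone-ray structure for the weight is available.
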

\begin{proof}
Let $f\in D(S^{1/2})\cap L^1(\R,d\mu_V)$ be fixed.
Fix $\zeta\in C_0^1(\R)$ satisfying $\zeta\geq 0$ and  
${\rm supp}\,\zeta\subset[\frac{1}{2},2]$. 
For $R>0$, we put 
\[
f_R(x)=f(x)\zeta\left(\frac{\psi_V(x)}{R}\right), \quad x\in \R. 
\]
Then noting that $\frac{R}{2}\leq \psi_V(x)\leq 2R$ on $\supp f_R$, 
we have $R\|f_R\|_{L^1}\leq 2\|\psi_Vf\|_{L^1}$ and also
for $p\geq 1$ and $q\in\R$,  
\begin{align*}
\int_0^\infty\|f_R\|_{L^p}^p\,R^{q-1}dR
&=
\int_{\R}|f(x)|^p\int_0^\infty \left|\zeta\left(\frac{\psi_V(x)}{R}\right)\right|^pR^{q-1}\,dR\,dx
\\
&=
K_{p,q}\int_{\R}|f(x)|^p\psi_V(x)^q\,dx,
\end{align*}
where we have put 
\[
K_{p,q}=
\int_0^\infty |\zeta(s)|^p s^{-q-1}\,ds.
\]
On the other hand, the relation $\|\psi_1'\|_{L^\infty}=\|\psi_2'\|_{L^\infty}=k_V$ 
gives 
\begin{align*}
\left|
f'(x)
\zeta\left(\frac{\psi_V(x)}{R}\right)
- f_R'(x)
\right|
&=
|f(x)|\left|\zeta'\left(\frac{\psi_V(x)}{R}\right)\right|\frac{|\psi_V'(x)|}{R}
\\
&\leq 
4k_V\|\zeta'\|_{L^\infty(\R)}\frac{|f(x)|}{\psi_V(x)},
\end{align*}
and therefore, using Lemma \ref{lem:hardy:V} {\bf (i)} we have
\begin{align*}
\|f_R'\|_{L^2}
&\leq \|f'\zeta(\psi_V/R)\|_{L^2}
+\|f'\zeta(\psi_V/R)-f_R'\|_{L^2}
\\
&\leq \|\zeta\|_{L^\infty} 
\|f'\|_{L^2}+
4k_V\|\zeta'\|_{L^\infty}\left\|\frac{1}{\psi_1}+\frac{1}{\psi_2}\right\|_{L^\infty}
\left\|\frac{f}{\psi_1\psi_2}\right\|_{L^2}
\\
&\leq 
\widetilde{K}
\|S^{1/2}f\|_{L^2}, 
\end{align*}
where $\widetilde{K}=\|\zeta\|_{L^\infty}+8\|\zeta'\|_{L^\infty}\|\psi_1^{-1}+\psi_2^{-1}\|_{L^\infty}$.
Therefore 
combining all inequalities in the above consideration
with the one-dimensional Nash inequality (Lemma \ref{lem:nash}), 
we conclude that 
\begin{align*}
K_{2,4/3}
\|f\psi_V^{2/3}\|_{L^2}^2
&=
\int_0^\infty\|f_R\|_{L^2}^2R^{1/3}\,dR
\\
&\leq 
C_{\rm Nash}^{1/3}\int_0^\infty
\|f_R\|_{L^1}^{4/3}\|f_R'\|_{L^2}^{2/3}R^{1/3}dR
\\
&\leq 
(2C_{\rm Nash})^{1/3}
\widetilde{K}^{2/3}
\left(\int_0^\infty
\|f_R\|_{L^1}\,dR\right)
\|\psi_Vf\|_{L^1}^{1/3}
\|S^{1/2}f\|_{L^2}^{2/3}
\\
&\leq 
(2C_{\rm Nash})^{1/3}
\widetilde{K}^{2/3}K_{1,1}
\|\psi_Vf\|_{L^1}^{4/3}
\|S^{1/2}f\|_{L^2}^{2/3}.
\end{align*}
The proof is complete.
\end{proof}

The following Proposition is 
a variant of Nash inequality for the Schr\"odinger operator $\mathcal{S}$ 
which is explained in Introduction. 

\begin{proposition}\label{prop:nash-type}
There exists a positive constant $C_{{\rm Nash}}^\sharp$ 
such that 
for every $f\in D(S^{1/2})\cap L^1(\R,d\mu_V)$, 
\[
\|f\|_{L^2}^{2+4/3}
\leq 
C_{{\rm Nash}}^\sharp
\|\psi_Vf\|_{L^1}^{4/3}
\|S^{1/2}f\|_{L^2}^{2}.
\]
\end{proposition}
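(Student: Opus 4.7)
The plan is to reduce the claim to the one-dimensional classical Nash inequality (Lemma \ref{lem:nash} with $N=1$) by means of a weighted interpolation that converts the unweighted $L^1$-norm into the prescribed quantities. The key auxiliary inequality is
\[
\|f\|_{L^1}\leq C_0\,\|\psi_V f\|_{L^1}^{1/3}\,\|f\|_{L^2}^{2/3},\qquad f\in L^2(\R)\cap L^1(\R,d\mu_V).
\]
To establish it, I would first observe that Lemma \ref{lem:ode:V} yields $\psi_1,\psi_2\geq 1$ on $\R$ (by monotonicity together with the limits at $\mp\infty$), and that the asymptotics $\psi_1(x)/x\to k_V$ as $x\to+\infty$ and $\psi_2(x)/x\to -k_V$ as $x\to-\infty$ produce a constant $c_V>0$ with $\psi_V(x)\geq c_V\lr{x}$ throughout $\R$. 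For $R\geq 1$, split
\[
\|f\|_{L^1}=\int_{|x|\leq R}|f|\,dx+\int_{|x|>R}|f|\,dx,
\]
bound the first summand by $\sqrt{2R}\,\|f\|_{L^2}$ via the Cauchy--Schwarz inequality, and the second by $R^{-1}\int|x||f|\,dx\leq (c_V R)^{-1}\|\psi_V f\|_{L^1}$. Optimizing in $R$ (the minimum occurs near $R\sim(\|\psi_V f\|_{L^1}/\|f\|_{L^2})^{2/3}$) yields the interpolation.

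Once this is in hand, apply Lemma \ref{lem:nash} with $N=1$, namely $\|f\|_{L^2}^6\leq C_{\rm Nash}\,\|f\|_{L^1}^4\,\|f'\|_{L^2}^2$; this is legitimate because $\psi_V\geq 2$ forces $f\in L^1(\R)$ whenever $\psi_V f\in L^1$. Substituting the auxiliary bound gives
\[
\|f\|_{L^2}^6\leq C_{\rm Nash}C_0^4\,\|\psi_V f\|_{L^1}^{4/3}\,\|f\|_{L^2}^{8/3}\,\|f'\|_{L^2}^2,
\]
and dividing by $\|f\|_{L^2}^{8/3}$ isolates $\|f\|_{L^2}^{10/3}$ on the left. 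Since $V\geq 0$, one has $\|f'\|_{L^2}^2\leq\|f'\|_{L^2}^2+\int Vf^2\,dx=\|S^{1/2}f\|_{L^2}^2$, which completes the proof with $C_{{\rm Nash}}^\sharp=C_{\rm Nash}C_0^4$.

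The only genuine obstacle is quantifying the linear lower bound $\psi_V(x)\geq c_V\lr{x}$, a routine consequence of Lemma \ref{lem:ode:V} paired with $\psi_V\geq 2$; apart from this, the argument is standard optimization/interpolation. Note that Lemma \ref{lem:nash-pre} is not logically needed in this derivation: it encodes a strictly stronger statement (with the heavier $L^2(\psi_V^{4/3}\,dx)$ norm on the left) obtained by decomposing $f$ in level sets of $\psi_V$, whereas the present Proposition is the plain $L^2(\R)$ version that will drive the expected $t^{-3/4}$ decay of $e^{-tS}$ through a Nash-type ODE argument.
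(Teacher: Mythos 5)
Your proof is correct, and it takes a genuinely different and more elementary route than the paper. The paper first proves the Hardy inequality of Lemma \ref{lem:hardy:V} {\bf (i)} and the weighted Nash inequality of Lemma \ref{lem:nash-pre} (via the decomposition $f_R=f\zeta(\psi_V/R)$ over level sets of $\psi_V$, integrated in $R$), and then combines them through the H\"older interpolation $\|f\|_{L^2}\leq\|f/\psi_V\|_{L^2}^{2/5}\|\psi_V^{2/3}f\|_{L^2}^{3/5}$; the structure of $V$ enters through $\psi_1\psi_2$ and the super-harmonicity of $(\psi_1\psi_2)^{1/2}$. You instead bypass both of those lemmas: the only input from Lemma \ref{lem:ode:V} you need is the lower bound $\psi_V\geq c_V\lr{x}$ (which is indeed routine: $\psi_1,\psi_2\geq1$ by monotonicity and the limits at $\mp\infty$, linear growth at $\pm\infty$, and continuity/positivity on compacts), and the rest is the radius-$R$ splitting of $\|f\|_{L^1}$, Cauchy--Schwarz, optimization in $R$, and the classical one-dimensional Nash inequality together with $\|f'\|_{L^2}^2\leq\|S^{1/2}f\|_{L^2}^2$. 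The exponents all check out ($10/3=2+4/3$, and the interpolation $\|f\|_{L^1}\leq C_0\|\psi_Vf\|_{L^1}^{1/3}\|f\|_{L^2}^{2/3}$ is scale-consistent); one minor cosmetic point is that your restriction to $R\geq1$ is unnecessary, since the splitting inequality holds for all $R>0$ and the optimal $R$ need not exceed $1$. Your argument has the virtue of making transparent that the improved exponent $\gamma=4/3$ comes solely from the linear growth of the invariant density $\psi_V$, i.e.\ from the weight $\lr{x}$ in the $L^1$-norm; what the paper's longer route buys is the intrinsically stronger Lemma \ref{lem:nash-pre} (control of $\|\psi_V^{2/3}f\|_{L^2}$) and the Hardy machinery, which are however also needed elsewhere in the paper, so the author's choice is one of economy of tools rather than necessity here. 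Your closing remark that Lemma \ref{lem:nash-pre} is ``strictly stronger'' is slightly loose --- the two inequalities have different homogeneities and the Proposition is deduced from it only in conjunction with the Hardy inequality --- but this does not affect the validity of your proof.
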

\begin{proof}
We combine the inequalities in 
Lemma \ref{lem:hardy:V} {\bf (i)} 
and \ref{lem:nash-pre} with the relation 
$\psi_1\psi_2\leq C\psi_V$
and 
the inequality
\[
\|f\|_{L^2}\leq 
\left\|\frac{f}{\psi_V}\right\|_{L^2}^{2/5}
\|\psi_V^{2/3}f\|_{L^2}^{3/5}
\leq 
C^{-2/5}
\left\|\frac{f}{\psi_1\psi_2}\right\|_{L^2}^{2/5}
\|\psi_V^{2/3}f\|_{L^2}^{3/5}.
\]
The proof is complete.
\end{proof}

\section{Decay estimate for the Schr\"odinger semigroup $e^{-tS}$}\label{sec:semigroup}

By  applying Proposition \ref{prop:nash-type}, 
we discuss a decay property of solutions to 
the parabolic equation governed by the Schr\"odinger operator $S$:
\begin{equation}\label{eq:heat}
\begin{cases}
\pa_t v(x,t)+Sv(x,t)=0
&\text{in}\ \R\times (0,\infty),
\\
v(x,0)=f(x) 
&\text{on}\ \R;
\end{cases}
\end{equation}
note that the solution $v$ of \eqref{eq:heat} 
can be represented by $v(t)=e^{-tS}f$. 
\subsection{$L^2$-decay estimate for $e^{-tS}$}
The first subsection provides $L^2$-$L^1$ type decay estimate for $e^{-tS}$
as a role of ``Nash inequality'' (Proposition \ref{prop:nash-type}).
The proof is analogous to the $L^2$-$L^1$ estimate for the heat semigroup via 
the usual Nash inequality. 
\begin{proposition}\label{lem:L2-wL1}
Assume that $V\in\mathcal{V}$. Then 
there exists a positive constant $C_2$ such that
for every $f\in L^2(\R)\cap L^1(\R,d\mu_V)$,   
the corresponding solution $v$ of the parabolic equation \eqref{eq:heat} 
satisfies for every $t>0$,  
\[
\|v(t)\|_{L^2}
\leq C_2t^{-3/4}\|\psi_Vf\|_{L^1}.
\]
\end{proposition}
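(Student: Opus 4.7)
The plan is to run a Nash-type differential inequality argument on the $L^2$ norm of $v(t)=e^{-tS}f$, in direct analogy with the classical derivation of the $L^2$--$L^1$ decay for the heat semigroup from Nash's inequality, but using Proposition \ref{prop:nash-type} and Lemma \ref{lem:C_M-inv} as the two ingredients that replace them in the present weighted setting.

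More precisely, I would first assume $f \in D(S)\cap L^1(\R,d\mu_V)$ (a density that follows from the closedness of $\mathcal{C}_m$ used in Lemma \ref{lem:C_M-inv}) so that $v\in C^1([0,\infty);L^2(\R))\cap C([0,\infty);D(S))$ and set $\phi(t) = \|v(t)\|_{L^2}^2$. Standard semigroup/spectral calculus then gives
\[
\phi'(t) = 2\langle \partial_t v(t), v(t)\rangle_{L^2} = -2\langle Sv(t),v(t)\rangle_{L^2} = -2\|S^{1/2} v(t)\|_{L^2}^2.
\]
By the variant of Nash's inequality (Proposition \ref{prop:nash-type}) applied to $v(t)\in D(S^{1/2})\cap L^1(\R,d\mu_V)$,
\[
\|v(t)\|_{L^2}^{10/3} \leq C_{\mathrm{Nash}}^\sharp \|\psi_V v(t)\|_{L^1}^{4/3}\|S^{1/2}v(t)\|_{L^2}^{2}.
\]
The key point is now Lemma \ref{lem:C_M-inv}, which gives the $L^1(\R,d\mu_V)$-contractivity $\|\psi_V v(t)\|_{L^1}\leq \|\psi_V f\|_{L^1}=:M$, so that
\[
\phi'(t) \leq -\frac{2}{C_{\mathrm{Nash}}^\sharp M^{4/3}}\,\phi(t)^{5/3}.
\]

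This is a separable Bernoulli-type ODI. Multiplying by $-\tfrac{2}{3}\phi^{-5/3}$ gives $\tfrac{d}{dt}\phi^{-2/3}\geq \tfrac{4}{3C_{\mathrm{Nash}}^\sharp M^{4/3}}$, so integrating from $0$ to $t$ and dropping the nonnegative initial term $\phi(0)^{-2/3}$ yields
\[
\phi(t)^{-2/3} \geq \frac{4\,t}{3C_{\mathrm{Nash}}^\sharp\|\psi_V f\|_{L^1}^{4/3}},
\]
hence $\|v(t)\|_{L^2}^2 = \phi(t) \leq C_2^2\, t^{-3/2}\|\psi_V f\|_{L^1}^{2}$ with $C_2^2=(3C_{\mathrm{Nash}}^\sharp/4)^{3/2}$, giving the desired estimate. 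A density argument (approximating a general $f\in L^2(\R)\cap L^1(\R,d\mu_V)$ by truncations in $D(S)\cap L^1(\R,d\mu_V)$ and passing to the limit using the contractivity $\|e^{-tS}\|_{L^2\to L^2}\leq 1$ together with Fatou's lemma for $\|\psi_V\cdot\|_{L^1}$) then extends the bound to the full space.

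The genuinely new ingredient is the invariance of $d\mu_V$ encoded in Lemma \ref{lem:C_M-inv}, since without it one could not close the ODI — the right-hand side would involve the time-dependent quantity $\|\psi_V v(t)\|_{L^1}$ rather than a fixed initial datum. Beyond that, the only care required is justifying the differentiation of $\phi$ and the density reduction; the rest is the textbook Nash ODI trick, so I do not anticipate further obstacles.
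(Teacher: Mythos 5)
Your proof is correct and follows essentially the same route as the paper: the Nash-type differential inequality for $\|v(t)\|_{L^2}^2$ closed by the $L^1(\R,d\mu_V)$-contractivity of Lemma \ref{lem:C_M-inv}, then integration of the resulting Bernoulli ODI. The only cosmetic difference is that the paper justifies the division by $Z(t)^{5/3}$ by noting $Z(t)>0$ via analyticity of $e^{-tS}$, whereas you handle regularity by a density reduction; both are routine.
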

\begin{proof}
The case $f\equiv 0$ is trivial. 
We assume $f\not\equiv 0$. 
Put $Z(t)=\|v(t)\|_{L^2}^2$ for $t\geq 0$. 
If $Z(t_0)=0$ for some $t_0>0$, then by the analyticity of 
$e^{-tS}$, we deduce $e^{-tS}f=0$ $(t>0)$ 
and then the continuity of $t\mapsto e^{-tS}f$ implies $f\equiv 0$ 
which is contradiction. Therefore we can always assume $Z(t)>0$. 
Moreover, Lemma \ref{lem:C_M-inv} provides
\begin{align*}
0<\|\psi_Vv(t)\|_{L^1}\leq \|\psi_Vf\|_{L^1}.
\end{align*}
Hence by Proposition \ref{prop:nash-type}, we have
\begin{align*}
\frac{d}{dt}Z(t)^{-2/3}
&= -\frac{2}{3}Z(t)^{-5/3}Z'(t)
\\
&= \frac{4}{3}Z(t)^{-5/3}
\|\psi_V v(t)\|_{L^1}^{-4/3}\Big(\|\psi_V v(t)\|_{L^1}^{4/3}
\|S^{1/2}v(t)\|_{L^2}^2\Big)
\\
&\geq \frac{4}{3(C_{{\rm N},0}^\sharp)^{10/3}\|\psi_Vf\|_{L^1}^{4/3}}.
\end{align*} 
Integrating it over $[0,t]$, we deduce
\[
Z(t)^{-2/3}\geq 
Z(0)^{-2/3}
+\frac{4t}{3(C_{{\rm N},0}^\sharp)^{10/3}\|\psi_Vf\|_{L^1}^{4/3}},
\]
and therefore we obtain the desired inequality.
\end{proof}
\subsection{Upper bounds for the Dirichlet energy with weights}

We also consider the decay estimate for the Dirichlet energy with weights. 
The Hardy--Rellich type inequality for the Schr\"odinger operator $S$ is employed.

\begin{proposition}\label{lem:w-energy-decay}
Moreover, for every 
$\beta\in [0,1)$,
there exists a positive 
constant $C_{3,\beta}$ such that 
for every $t>0$, 
\[
\int_{\R}\frac{(\pa_xv)^2+Vv^2}{\psi_V^{\beta}}\,dx
\leq C_{3,\beta} t^{-\frac{5+\beta}{2}}\|\psi_Vf\|_{L^1}^2.
\]
\end{proposition}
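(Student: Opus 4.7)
The plan is to combine the Hardy--Rellich inequality Lemma~\ref{lem:hardy:V}\,\textbf{(iii)} with the $L^{2}$--$L^{1}(d\mu_{V})$ decay from Proposition~\ref{lem:L2-wL1} and the analyticity of the Schr\"odinger semigroup $e^{-tS}$.

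A preliminary observation is the pointwise comparison
\[
\psi_{1}(x)\psi_{2}(x)\leq K\,\psi_{V}(x)\qquad(x\in\R)
\]
for some $K>0$. Indeed, Lemma~\ref{lem:ode:V} gives $\psi_{1}(x)/x\to k_{V}$ and $\psi_{2}(x)\to 1$ as $x\to+\infty$ (and symmetrically as $x\to-\infty$), hence $\psi_{1}\psi_{2}/\psi_{V}\to 1$ at both ends; continuity and strict positivity on compact sets then yield the global bound. Consequently,
\[
\frac{1}{\psi_{V}^{\beta}}\leq \frac{K^{\beta}}{(\psi_{1}\psi_{2})^{\beta}}.
\]

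Next I would apply Lemma~\ref{lem:hardy:V}\,\textbf{(iii)} to $v(t)=e^{-tS}f$, which belongs to $D(S)$ for every $t>0$ by analyticity of $e^{-tS}$, to obtain
\[
\int_{\R}\frac{(\pa_{x}v)^{2}+Vv^{2}}{\psi_{V}^{\beta}}\,dx
\leq K^{\beta}C_{1,3}\,\|Sv(t)\|_{L^{2}}^{1+\beta/2}\,\|v(t)\|_{L^{2}}^{1-\beta/2}.
\]
Proposition~\ref{lem:L2-wL1} directly gives $\|v(t)\|_{L^{2}}\leq C_{2}t^{-3/4}\|\psi_{V}f\|_{L^{1}}$. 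For the other factor I would use the semigroup identity $Sv(t)=\bigl(Se^{-(t/2)S}\bigr)e^{-(t/2)S}f$ together with the spectral bound
\[
\bigl\|Se^{-(t/2)S}\bigr\|_{L^{2}\to L^{2}}
=\sup_{\lambda\geq 0}\lambda e^{-\lambda t/2}=\frac{2}{et},
\]
to deduce
\[
\|Sv(t)\|_{L^{2}}\leq \frac{2}{et}\,\|v(t/2)\|_{L^{2}}\leq \widetilde{C}\,t^{-7/4}\,\|\psi_{V}f\|_{L^{1}}.
\]

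Substituting these estimates, the powers of $\|\psi_{V}f\|_{L^{1}}$ add up to exactly $(1+\beta/2)+(1-\beta/2)=2$, and the time exponent becomes
\[
-\tfrac{7}{4}\bigl(1+\tfrac{\beta}{2}\bigr)-\tfrac{3}{4}\bigl(1-\tfrac{\beta}{2}\bigr)
=-\tfrac{5+\beta}{2},
\]
which is precisely the asserted rate. The only delicate step is the pointwise reduction from the weight $\psi_{V}^{\beta}$ to the weight $(\psi_{1}\psi_{2})^{\beta}$ appearing in Lemma~\ref{lem:hardy:V}; beyond that the argument is a bookkeeping combination of the Hardy--Rellich inequality with parabolic smoothing for the analytic semigroup $e^{-tS}$.
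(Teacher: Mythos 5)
Your proposal is correct and follows essentially the same route as the paper: Lemma~\ref{lem:hardy:V}\,\textbf{(iii)} applied to $v(t)\in D(S)$, the smoothing bound $\|Se^{-\tau S}\|_{L^2\to L^2}\leq \tau^{-1}$ with $\tau=t/2$, and Proposition~\ref{lem:L2-wL1}, with the exponents combining to $-(5+\beta)/2$ exactly as in the paper's one-line computation. Your explicit verification of the comparison $\psi_1\psi_2\leq K\psi_V$ (needed to pass from the weight $(\psi_1\psi_2)^\beta$ of the lemma to the weight $\psi_V^\beta$ of the statement) is a detail the paper leaves implicit, but it is consistent with the remark following Lemma~\ref{lem:hardy:V}.
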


\begin{proof}
We use Lemma \ref{lem:hardy:V} {\bf (iii)}
with 
$\tau \|Se^{-\tau S}\|_{L^2\to L^2}\leq 1$ ($\tau>0$).
Then we deduce
\begin{align*}
\int_{\R}\frac{(\pa_xv)^2+Vv^2}{\psi_V^{\beta}}\,dx
&\leq 
C_{1,3}\|Sv(t)\|_{L^2}^{1+\frac{\beta}{2}}\|v(t)\|_{L^2}^{1-\frac{\beta}{2}}
\\
&\leq 
C_{1,3}\Big(\frac{2}{t}\Big)^{1+\frac{\beta}{2}}
\|v(t/2)\|_{L^2}^2.
\end{align*}
Applying Proposition \ref{lem:L2-wL1}, we obtain the desired estimate.
\end{proof}

\subsection{Optimality for the decay estimates}

Next we provide the proof of 
decay estimates from below 
in a weak sense.  
The following lemma provides a lower bound for the decay rate of 
$L^2$-norm and the Dirichlet energy. 
The proof depends on the test function method 
with positive harmonic functions with respect to $\mathcal{S}$.
\begin{proposition}\label{prop:lower}
Let  $f\in L^2(\R)\cap L^1(\R,d\mu_V)$ and let $v$ be the solution of 
parabolic equation \ref{eq:heat}.  
If 
\[
\int_{\R}fd\mu_V\neq 0,
\]
then there exists positive constants $c_4, c_4'$ such that for every $t\geq 0$, 
\begin{align*}
\int_0^t\|v(\tau)\|_{L^2}^{4/3}\,d\tau 
\geq c_4\log (1+t), \quad 
\int_0^t\|S^{1/2}v(\tau)\|_{L^2}^{4/5}\,d\tau 
\geq c_4'\log (1+t).
\end{align*}
In particular, one has
\begin{gather*}
\int_0^t(1+\tau)^{3/2}\|S^{1/2}v(\tau)\|_{L^2}^2\,d\tau
\geq (c_4')^{5/2}\log (1+t), 
\\
\int_0^t(1+\tau)^{1/2}\|v(\tau)\|_{L^2}^{2}\,d\tau
\geq (c_4)^{3/2}\log (1+t).
\end{gather*}
\end{proposition}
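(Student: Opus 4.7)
The plan is to combine the conservation law $\int_\R v(\tau)\psi_V\,dx=\int_\R f\psi_V\,dx=:M$, which is immediate from $\mathcal{S}\psi_V=0$, with a truncation argument to extract a pointwise lower bound on $\|v(t)\|_{L^2}$, and then use the Nash-type inequality of Proposition \ref{prop:nash-type} to transfer it to $\|S^{1/2}v(t)\|_{L^2}$.

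Concretely, fix a cutoff $\chi\in C_c^\infty(\R)$ with $\chi\equiv 1$ on $[-1,1]$ and $\supp\chi\subset[-2,2]$, and set $\chi_R(x)=\chi(x/R)$ and $I_R(\tau):=\int_\R v(\tau)\psi_V\chi_R\,dx$. Using selfadjointness of $S$ together with $\mathcal{S}\psi_V=0$ one computes
$$I_R'(\tau)=\int_\R v(\tau)\,(2\psi_V'\chi_R'+\psi_V\chi_R'')\,dx.$$
The integrand is supported in the annulus $\{R\le|x|\le 2R\}$, on which $|\psi_V'|\le 2k_V$, $\psi_V\le CR$, $|\chi_R'|\le C/R$, $|\chi_R''|\le C/R^2$; Cauchy--Schwarz on this set of measure $\le 4R$ then gives the key estimate
$$|I_R'(\tau)|\le KR^{-1/2}\|v(\tau)\|_{L^2},\qquad |I_R(t)-I_R(0)|\le KR^{-1/2}A(t),$$
where $A(t):=\int_0^t\|v(\tau)\|_{L^2}\,d\tau$. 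Since $f\psi_V\in L^1$ by the assumption $f\in L^1(\R,d\mu_V)$, dominated convergence gives $I_R(0)\to M$, so $|I_R(0)|\ge|M|/2$ for $R\ge R_0$; assuming WLOG $M>0$ and taking $R(t):=\max(R_0,(4KA(t)/M)^2)$, we obtain $I_R(t)\ge M/4$. On the other hand, Cauchy--Schwarz together with $\|\psi_V\chi_R\|_{L^2}\le C'R^{3/2}$ produces the opposite bound $I_R(t)\le C'R^{3/2}\|v(t)\|_{L^2}$.

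For $t$ large enough that $R(t)=(4K/M)^2A(t)^2$, combining these two gives the self-consistent inequality $\|v(t)\|_{L^2}\,A(t)^3\ge c_0M^4$; equivalently $\tfrac{d}{dt}A(t)^4\ge 4c_0M^4$, whence $A(t)\gtrsim Mt^{1/4}$ for $t$ large. Together with the upper bound $A(t)\lesssim t^{1/4}$ obtained by integrating Proposition \ref{lem:L2-wL1}, we recover the sharp pointwise bound $\|v(t)\|_{L^2}\gtrsim M^4 t^{-3/4}$ a.e.\ for $t\ge T$, and integrating $\|v\|_{L^2}^{4/3}\gtrsim t^{-1}$ yields the first inequality. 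Proposition \ref{prop:nash-type} with $\|\psi_V v(\tau)\|_{L^1}\le\|\psi_V f\|_{L^1}$ (Lemma \ref{lem:C_M-inv}) gives $\|S^{1/2}v\|_{L^2}^2\gtrsim\|v\|_{L^2}^{10/3}$, hence $\|S^{1/2}v\|_{L^2}^{4/5}\gtrsim\|v\|_{L^2}^{4/3}$, and the second log lower bound follows. The two ``in particular'' statements are immediate H\"older-in-time consequences, e.g.
$$\int_0^t\|v\|^{4/3}d\tau\le\Big(\int_0^t(1+\tau)^{1/2}\|v\|^2\,d\tau\Big)^{2/3}\bigl(\log(1+t)\bigr)^{1/3}.$$

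The main obstacle is engineering the cutoff estimate in Step 1 so that $|I_R'(\tau)|$ actually decays like $R^{-1/2}\|v(\tau)\|_{L^2}$: both $\psi_V'\chi_R'$ and $\psi_V\chi_R''$ are only $O(R^{-1})$ uniformly on an annulus of width $\sim R$, and the extra power $R^{1/2}$ needed to beat the $R^{3/2}$ coming from $\|\psi_V\chi_R\|_{L^2}$ has to be squeezed out by Cauchy--Schwarz in $L^2$ on that annulus. Precisely this $R^{-1/2}$ versus $R^{3/2}$ scaling fixes the optimization at $R\sim A(t)^2/M^2$ and produces the sharp exponent $-3/4$ on $t$, matching the upper bound of Proposition \ref{lem:L2-wL1}.
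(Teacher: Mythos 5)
Your proof is correct, but it takes a genuinely different route from the paper. The paper multiplies the equation by $\psi_V\eta\big(\tfrac{|x|^2+t}{R}\big)$ with a \emph{space-time} cutoff, integrates over all of $\R\times(0,\infty)$ to get $\big(\int f\,d\mu_V\big)^{5/3}\lesssim \iint |v|^{5/3}\chi(\xi_R)\psi_V^{1/3}\,dx\,dt$, and then extracts the logarithm by integrating this family of inequalities in $\tfrac{dR}{R}$ (an auxiliary function $Y(R)$ satisfying $Y'(R)\gtrsim R^{-1}$), followed by Fubini and the interpolation $\|\psi_V^{1/5}v\|_{L^{5/3}}\leq\|\psi_V v\|_{L^1}^{1/5}\|v\|_{L^2}^{4/5}$; it never produces, and does not need, a pointwise-in-time lower bound, nor does it invoke the upper bound of Proposition \ref{lem:L2-wL1}. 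You instead use a purely spatial cutoff, the conservation of $\int v\,d\mu_V$, and the optimization $R\sim A(t)^2/M^2$ to get the self-improving inequality $\|v(t)\|_{L^2}A(t)^3\gtrsim M^4$, hence $A(t)\gtrsim t^{1/4}$, and then feed in the upper bound $A(t)\lesssim t^{1/4}$ from Proposition \ref{lem:L2-wL1} to obtain the \emph{pointwise} bound $\|v(t)\|_{L^2}\gtrsim t^{-3/4}$ for large $t$ — a strictly stronger conclusion than the integrated log bound, from which the first estimate follows by integration. This costs you a dependence on the earlier upper-bound machinery (no circularity, since Proposition \ref{lem:L2-wL1} is independent), whereas the paper's argument is self-contained at this point; in exchange you show the $L^2$-decay rate $t^{-3/4}$ is attained pointwise, not just on average. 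The remaining steps — passing to $\|S^{1/2}v\|_{L^2}$ via Proposition \ref{prop:nash-type} and Lemma \ref{lem:C_M-inv}, and the H\"older-in-time deduction of the two ``in particular'' statements — coincide with the paper's. Two small points to tidy up: the constant in your pointwise bound also involves $\|\psi_V f\|_{L^1}^{-3}$ through the upper bound on $A(t)$, not only $M^4$; and in the regime where $R(t)=R_0$ your argument actually yields $\|v(t)\|_{L^2}\geq \text{const}>0$, so the lower bound holds for all $t>0$ and the passage from $t\geq T$ to $t\geq 0$ in the statement is the usual continuity/positivity adjustment of $c_4$.
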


\begin{proof}
It suffices to consider the case where 
\[
\int_{\R}fd\mu_V> 0.
\]
We fix 
$\eta\in C^\infty(\R)$ 
as 
\[
\eta(s)
\begin{cases}
=1
& \text{if}\ s\leq \frac{1}{2},
\\
\text{is decreasing} 
& \text{if}\ \frac{1}{2}<s<1,
\\
=0
& \text{if}\ s\geq 1,
\end{cases}
\]
and for $R>0$, 
\[
\eta_R(x,t)=\eta\big(\xi_R(x,t)\big), \quad
\xi_R(x,t)=\frac{|x|^2+t}{R}, 
\quad (x,t)\in \R \times [0,\infty).
\]
We also use  the indicator function $\chi$ of the interval $(\frac{1}{2},1)$ which satisfies 
$|\eta'|\leq \|\eta'\|_{L^\infty}\chi$.
By the dominated convergence theorem, we can choose $R_0>0$ such that 
for every $R\geq R_0$, 
\[
\int_\R f\eta\left(\frac{|x|^2}{R}\right)\psi_V\,dx
\geq \frac{1}{2}
\int_\R f\,d\mu_V>0.
\]
Then multiplying 
$\psi_V\eta_R$ to the equation in \eqref{eq:heat} 
and integrating it over $\R\times (0,\infty)$, we have
\begin{align*}
0&=\int_0^\infty
\!\!\int_\R (\pa_tv+S_Vv)\eta_R\psi_V\,dx\,dt
\\
&=\int_0^\infty
\frac{d}{dt}\left(\int_\R v\eta_R\psi_V\,dx\right)\,dt
+
\int_0^\infty
\!\!\int_\R v \big(-\pa_t(\eta_R\psi_V)+
\rev{S} 
(\eta_R\psi_V)\big)\,dx\,dt
\\
&=
-
\int_\R f\eta\left(\frac{|x|^2}{R}\right)\psi_V\,dx
+
\int_0^\infty
\!\!\int_\R v \left(-\pa_t\eta_R
+2\frac{\psi_V'\pa_x\eta_R}{\psi_V}+\pa_x^2\eta_R\right)\psi_V\,dx\,dt
\\
&\leq 
-
\int_\R f\eta\left(\frac{|x|^2}{R}\right)\psi_V\,dx
+
\frac{\widetilde{K}_1}{R}
\int_0^\infty
\!\!\int_\R |v| 
\chi(\xi_R)\psi_V\,dx\,dt, 
\end{align*}
where $\widetilde{K}_1=(2+4\|x\psi_V'/\psi_V\|_{L^\infty})\|\eta'\|_{L^\infty}+4\|\eta''\|_{L^\infty}$. 
Since the H\"older inequality yields 
\begin{align*}
\int_0^\infty
\!\!\int_\R |v| 
\chi(\xi_R)\psi_V\,dx\,dt
&
\leq 
\left(\int_0^\infty
\!\!\int_\R\chi(\xi_R)\psi_V^2\,dx\,dt\right)^{2/5}
\left(\int_0^\infty
\!\!\int_\R|v|^{5/3}\chi(\xi_R)\psi_V^{1/3}\,dx\,dt\right)^{3/5}, 
\\
&
\leq 
\widetilde{K}_2R
\left(\int_0^\infty
\!\!\int_\R|v|^{5/3}\chi(\xi_R)\psi_V^{1/3}\,dx\,dt\right)^{3/5}
\end{align*}
for some positive constant $\widetilde{K}_2$. 
Then we deduce for $R\geq R_0$, 
\[
\left(\int_{\R}f\,d\mu_V\right)^{5/3}
\leq 
\big(2\widetilde{K}_1\widetilde{K}_2\big)^{5/3}
\int_0^\infty
\!\!\int_\R|v|^{5/3}\chi(\xi_R)\psi_V^{1/3}\,dx\,dt.
\]
We put an auxiliary function $Y:[R_0,\infty)\to [0,\infty)$ as 
\[
Y(R)=\int_{0}^R
\left(\int_0^\infty
\!\!\int_\R|v|^{5/3}\chi(\xi_R)\psi_V^{1/3}\,dx\,dt\right)\,\frac{d\rho}{\rho}, \quad R\geq R_0.
\]
Then the previous computation shows 
the ordinary differential inequality
\[
\big(2\widetilde{K}_1\widetilde{K}_2\big)^{-5/3}
\left(\int_\R f\,d\mu_V\right)^{5/3}
\frac{1}{R}\leq Y'(R), \quad R\geq R_0
\]
which implies
\[
Y(R_0)+
\big(2\widetilde{K}_1\widetilde{K}_2\big)^{-5/3}
\left(\int_\R f\,d\mu_V\right)^{5/3}
\log \left(\frac{R}{R_0}\right)\leq Y(R), \quad R\geq R_0.
\]
On the other hand, the Fubini theorem gives 
\begin{align*}
Y(R)
&=
\int_0^\infty
\!\!\int_\R |v|^{5/3}
\left(\int_{0}^R\chi(\xi_\rho)\,\frac{d\rho}{\rho}\right)\psi_V^{1/3} \,dx\,dt
\\
&=
\int_0^\infty
\!\!\int_\R |v|^{5/3}
\left(\int_{\xi_R}^\infty 
\chi(\sigma)\frac{d\sigma}{\sigma}\right)
\psi_V^{1/3}\,dx\,dt
\\
&\leq 
\log 2\int_0^R\|\psi_V^{1/5}v(t)\|_{L^{5/3}}^{5/3} \,dt.
\end{align*}
Combining the above inequalities with the interpolation
\[
\|\psi_V^{1/5}v(t)\|_{L^{5/3}}
\leq 
\|\psi_V v(t)\|_{L^1}^{1/5}
\|v(t)\|_{L^2}^{4/5}
\]
and Lemma \ref{lem:C_M-inv}, we obtain the first desired estimate. 
The second estimate is easily obtained via the first estimate 
with Proposition \ref{prop:nash-type} and Lemma \ref{lem:C_M-inv} again. 
The proof is complete.
\end{proof}

\section{Optimal decay estimate for the target problem}\label{sec:damped}

To end the paper, we prove 
the decay estimates 
for \eqref{problem} from above and below. 
To clarify the situation, 
it is reasonable to use the notation
$U(t)g$ which stands for 
the unique solution of the following 
Cauchy problem:
\begin{equation}\label{eq:dw:0,g}
\begin{cases}
\pa_t^2w+
\rev{S} 
w+\pa_tw=0
&\text{in}\ \R\times (0,\infty), 
\\
(w,\pa_tw)(0)=(0,g);
\end{cases}
\end{equation}
note that it is now clear that the abstract diffusion phenomena (Lemma \ref{lem:matsumura-abst}) is applicable. 
Then we also introduce a function
\begin{align}
\label{eq:diff-ini}
v_0=u_0+u_1.
\end{align}

The following lemma gives a decay rate for two important quantities of the difference between 
$u(t)$ and $U(t)v_0$. 
\begin{lemma}\label{lem:5.1}
Assume that $V\in \mathcal{V}$ and $(u_0,u_1)\in \mathcal{H}$. Let $u$ be the solution of \eqref{problem}
and let $v_0$ be given in \eqref{eq:diff-ini}.  Then 
there exists a positive constant $C_{5}$ such that 
for every $t\geq 0$, 
\begin{align*}
(1+t)^3E\big(u-U(\cdot)v_0;t\big)
+
(1+t)^2\|u(t)-U(t)v_0\|_{L^2}
&\leq C_{5}\Big(\|S^{1/2}u_0\|_{L^2}^2+\|u_0\|_{L^2}^2\Big).
\end{align*}
\end{lemma}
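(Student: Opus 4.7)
The strategy hinges on the algebraic identity
\[
\tilde u(t):=u(t)-U(t)v_0 = \pa_t U(t)u_0.
\]
To establish this, I would write $u(t)=\mathcal{K}_0(t)u_0+U(t)u_1$, where $\mathcal{K}_0(t)h$ denotes the unique solution of \eqref{problem} with data $(h,0)$, and verify that $w(t):=U(t)u_0+\pa_t U(t)u_0$ satisfies $w(0)=u_0$, $w'(0)=u_0+\pa_t^2 U(0)u_0=u_0-SU(0)u_0-\pa_t U(0)u_0=0$, and the homogeneous damped wave equation (both summands do). By uniqueness, $w(t)=\mathcal{K}_0(t)u_0$, and subtracting $U(t)v_0=U(t)u_0+U(t)u_1$ yields the identity.

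For the $L^2$ bound I would decompose
\[
\pa_t U(t)u_0 = \pa_t\bigl(U(t)-e^{-tS}\bigr)u_0 - Se^{-tS}u_0,
\]
controlling the first piece by $Ct^{-2}\|u_0\|_{L^2}$ via Lemma \ref{lem:matsumura-abst} (iii), and the second by the spectral estimate $\|Se^{-tS}u_0\|_{L^2}\le Ct^{-1}\|u_0\|_{L^2}$ (from $\sup_{\lambda\ge 0}\lambda e^{-t\lambda}\le (et)^{-1}$). Combined with the short-time bound $\|\pa_t U(t)u_0\|_{L^2}\le\|u_0\|_{L^2}$ for $t\in[0,1]$ (from the monotonicity of the energy of $U(\cdot)u_0$), this gives $(1+t)^2\|\tilde u(t)\|_{L^2}^2\le C\|u_0\|_{L^2}^2$.

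For the full energy $E(\tilde u;t)=\|\pa_t\tilde u\|_{L^2}^2+\|S^{1/2}\tilde u\|_{L^2}^2$ the crucial observation is a cancellation: from $\pa_t^2 Uu_0=-SUu_0-\pa_t Uu_0$, inserting $\pm e^{-tS}u_0$ and using $\pa_t e^{-tS}=-Se^{-tS}$, I obtain
\[
\pa_t\tilde u = \pa_t^2 U(t)u_0 = -S\bigl(U(t)-e^{-tS}\bigr)u_0 - \pa_t\bigl(U(t)-e^{-tS}\bigr)u_0,
\]
the two heat contributions $\mp Se^{-tS}u_0$ cancelling exactly. Lemma \ref{lem:matsumura-abst} (ii) applied with $g=S^{1/2}u_0$ (using that $S$ commutes with $U(t)$ and $e^{-tS}$) bounds $\|S(U-e^{-tS})u_0\|_{L^2}$ by $Ct^{-3/2}\|S^{1/2}u_0\|_{L^2}$, while Lemma \ref{lem:matsumura-abst} (iii) gives $\|\pa_t(U-e^{-tS})u_0\|_{L^2}\le Ct^{-2}\|u_0\|_{L^2}$, yielding $\|\pa_t\tilde u\|_{L^2}\le Ct^{-3/2}(\|u_0\|_{L^2}+\|S^{1/2}u_0\|_{L^2})$ for $t\ge 1$. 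A parallel argument writes $S^{1/2}\tilde u=\pa_t S^{1/2}(U-e^{-tS})u_0 - S^{3/2}e^{-tS}u_0$ and applies Lemma \ref{lem:matsumura-abst} (iii) with $g=S^{1/2}u_0$ together with the spectral estimate $\|S^{3/2}e^{-tS}u_0\|_{L^2}\le Ct^{-3/2}\|u_0\|_{L^2}$. Squaring and appending the short-time bound $E(\tilde u;t)\le E(\tilde u;0)=\|u_0\|_{L^2}^2+\|S^{1/2}u_0\|_{L^2}^2$ on $[0,1]$ closes the estimate.

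The main obstacle is recognising the cancellation above. A term-by-term estimate of $-SUu_0-\pa_t Uu_0$ only produces $\|SUu_0\|_{L^2}\le Ct^{-1/2}\|S^{1/2}u_0\|_{L^2}$ via the elementary heat bound on $\|Se^{-tS}u_0\|_{L^2}$, which falls a full power of $t$ short of the required $t^{-3/2}$. Observing that the heat pieces of the two summands cancel---a direct consequence of $\pa_t e^{-tS}=-Se^{-tS}$---leaves only the Matsumura-type remainders, whose decay rates from Lemma \ref{lem:matsumura-abst} (ii)--(iii) are exactly what is needed.
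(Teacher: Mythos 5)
Your proof is correct, and it rests on the same single essential identity as the paper's: $u(t)-U(t)v_0=\pa_tU(t)u_0$ (the paper sets $\widetilde u=U(\cdot)u_0$ and simply asserts $\pa_t\widetilde u=u-U(\cdot)v_0$; your verification via uniqueness of the problem with data $(u_0,0)$ is a careful version of that observation). Where you genuinely diverge is in extracting the rates. The paper invokes ``the usual energy method (for higher order derivatives)'': the weighted multiplier technique applied to the equations satisfied by $\widetilde u$ and $\pa_t\widetilde u$, which yields
\[
(1+t)^{3}E(\pa_t\widetilde u;t)+(1+t)^{2}\|\pa_t\widetilde u(t)\|_{L^2}^{2}
\lesssim E(\pa_t\widetilde u;0)+\|\pa_t\widetilde u(0)\|_{L^2}^{2}+E(\widetilde u;0)+\|\widetilde u(0)\|_{L^2}^{2}
\]
in one stroke, for all $t\ge0$, without any reference to $e^{-tS}$. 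You instead route everything through the abstract Radu--Todorova--Yordanov estimates (Lemma \ref{lem:matsumura-abst}) together with the analyticity of $e^{-tS}$, the key point being the exact cancellation $-Se^{-tS}u_0-\pa_te^{-tS}u_0=0$, after which only Matsumura-type remainders survive at the rates $t^{-3/2}$ and $t^{-2}$. This costs you a separate short-time argument on $[0,1]$ (which you supply via energy monotonicity, with the correct data $\pa_t\widetilde u(0)=u_0$, $\pa_t^2\widetilde u(0)=-u_0$) and some commutation bookkeeping ($S^{1/2}\Theta(t)=\Theta(t)S^{1/2}$ and the substitution $g=S^{1/2}u_0$ in parts (ii)--(iii), which is standard but should be stated); it buys a transparent structural explanation of why $u-U(\cdot)v_0$ gains a full power of $t$ over $U(\cdot)v_0$ itself, whereas the multiplier method achieves this implicitly. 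Both arguments produce exactly the claimed bound in terms of $\|u_0\|_{L^2}^2+\|S^{1/2}u_0\|_{L^2}^2$ (note that, as your squared formulation and the paper's own proof both indicate, the $L^2$ term in the statement should be read as $(1+t)^2\|u(t)-U(t)v_0\|_{L^2}^2$).
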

\begin{proof}
Put $\widetilde{u}(t)=U(t)u_0$. Then it is easy to see that 
$\pa_t\widetilde{u}(t)=u(t)-U(t)v_0$.
Applying the usual energy method (for higher order derivatives), 
we have
\begin{align*}
(1+t)^3E(\pa_t\widetilde{u};t)
+
(1+t)^2\|\pa_t\widetilde{u}(t)\|_{L^2}^2
\leq C
\Big(
E(\pa_t\widetilde{u};0)+\|\pa_t\widetilde{u}(0)\|_{L^2}^2
+
E(\widetilde{u};0)+\|\widetilde{u}(0)\|_{L^2}^2
\Big).
\end{align*}
The proof is complete.
\end{proof}

Here we focus our attention to the other part $U(t)v_0$. 
Roughly speaking, the optimal decay rate for $U(t)v_0$ is given by that of $e^{-tS}v_0$. 
\begin{lemma}\label{lem:5.2}
Assume that $V\in \mathcal{V}$ and $(u_0,u_1)\in \mathcal{H}$. 
Let $v_0$ be given in \eqref{eq:diff-ini}. 
Then there exists a positive constant $C_6$ such that 
for every $t\geq 1$, 
\begin{equation}
t^3 E\big(U(\cdot)v_0-e^{-tS}v_0;t\big)
+
t^2 \|U(t)v_0-e^{-tS}v_0\|_{L^2}^2
\leq 
C_{6}\|v_0\|_{L^2}^2.
\end{equation}
\end{lemma}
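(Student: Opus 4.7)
The plan is to read off Lemma \ref{lem:5.2} as a direct application of the abstract Matsumura estimates in Lemma \ref{lem:matsumura-abst} with $A=S$, $H=L^2(\R)$ and $g=v_0$, noting that under Definition \ref{def:ops-sols} we have $U(t)v_0=\Theta(t)v_0$, and that the energy splits as
\[
E\bigl(U(\cdot)v_0-e^{-tS}v_0;t\bigr)
=\|\pa_t w(t)\|_{L^2}^2+\|S^{1/2}w(t)\|_{L^2}^2,
\qquad w(t):=U(t)v_0-e^{-tS}v_0,
\]
since $\|S^{1/2}\varphi\|_{L^2}^2=\|\varphi'\|_{L^2}^2+\int_\R V\varphi^2\,dx$ for $\varphi\in H^1(\R)$.

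Next I would bound each of the three pieces separately. For the $L^2$ part I would use Lemma \ref{lem:matsumura-abst}~(i); since $S\geq 0$, the semigroup $e^{-tS}$ is $L^2$-contractive so $\|e^{-tS/2}v_0\|_{L^2}\leq\|v_0\|_{L^2}$, and the resolvent factor satisfies $\|(S^{1/2}+1)^{-1}v_0\|_{L^2}\leq\|v_0\|_{L^2}$. Squaring and multiplying by $t^2$ gives
\[
t^2\|w(t)\|_{L^2}^2\leq C\bigl(1+t^2 e^{-t/8}\bigr)\|v_0\|_{L^2}^2\leq C'\|v_0\|_{L^2}^2,\qquad t\geq 1.
\]
For the $S^{1/2}$ part I would use Lemma \ref{lem:matsumura-abst}~(ii); contractivity again gives $\|e^{-tS/2}v_0\|_{L^2}\leq\|v_0\|_{L^2}$, while the resolvent factor is $\|(S^{1/2}+1)^{-1}S^{1/2}v_0\|_{L^2}\leq\|v_0\|_{L^2}$ because $(S^{1/2}+1)^{-1}S^{1/2}$ is bounded on $L^2(\R)$ by the spectral theorem. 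Squaring and multiplying by $t^3$ absorbs the $t^{-3}$ decay and the exponential remainder. For the time-derivative part, Lemma \ref{lem:matsumura-abst}~(iii) gives $\|\pa_t w(t)\|_{L^2}\leq C t^{-2}\|v_0\|_{L^2}$ directly, hence $t^3\|\pa_t w(t)\|_{L^2}^2\leq C t^{-1}\|v_0\|_{L^2}^2\leq C\|v_0\|_{L^2}^2$ for $t\geq 1$.

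Summing the three contributions yields the claim with $C_6$ depending only on the Matsumura constants $C_{\mathrm M,1},C_{\mathrm M,2},C_{\mathrm M,3}$. The only nontrivial point, and therefore the mild ``obstacle,'' is the justification that the abstract energy functional of Lemma \ref{lem:matsumura-abst} really coincides with the concrete energy \eqref{intro:energy:V} of \eqref{problem}; this is where the characterization $D(S^{1/2})=H^1(\R)$ from Definition \ref{def:ops-sols}~(i) is used, and where the identification $U(t)v_0=\Theta(t)v_0$ with $\Theta$ built from the operator $\mathcal A$ of Definition \ref{def:ops-sols}~(ii) is invoked. Beyond this bookkeeping, the proof is a one-line application of the three abstract diffusion estimates, with contractivity of $e^{-tS}$ and boundedness of $(S^{1/2}+1)^{-1}S^{1/2}$ converting the right-hand sides into $\|v_0\|_{L^2}$.
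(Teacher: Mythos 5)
Your proposal is correct and follows essentially the same route as the paper: the paper's proof is precisely an application of Lemma \ref{lem:matsumura-abst} (i)--(iii) with $A=S$ and $g=v_0$, using contractivity of $e^{-tS}$ and boundedness of the resolvent factors to reduce the right-hand sides to $\|v_0\|_{L^2}$. The extra bookkeeping you flag (identifying $E$ with $\|\pa_t w\|_{L^2}^2+\|S^{1/2}w\|_{L^2}^2$ via $D(S^{1/2})=H^1(\R)$, and $U(t)v_0=\Theta(t)v_0$) is left implicit in the paper but is exactly what makes the application legitimate.
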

\begin{proof}
Applying Lemma \ref{lem:matsumura-abst} with $A=S$, we have 
for every $t\geq1$, 
\begin{align*}
\|U(t)v_0-e^{-tS_V}v_0\|_{L^2}
&\leq 
C_{{\rm M},1}\Big(t^{-1}+e^{-t/16}\Big)\|v_0\|_{L^2},
\\
\|S^{1/2}(U(t)v_0-e^{-tS_V}v_0)\|_{L^2}
&\leq 
C_{{\rm M},2}\Big(t^{-\frac{3}{2}}+e^{-t/16}\Big)\|v_0\|_{L^2},
\\
\|\pa_t(U(t)v_0-e^{-tS_V}v_0)\|_{L^2}
&\leq 
2C_{{\rm M},3}t^{-2}\|v_0\|_{L^2}.
\end{align*}
The proof is complete.
\end{proof}

\begin{proof}[Proof of Theorem \ref{main1}]
Assume $\lr{x}(u_0+u_1)\in L^1(\R)$ which is equivalent to $v_0\in L^1(\R,d\mu_V)$. 
Put for $t\geq0$, 
\[
\pa_t\widetilde{u}(t)=u(t)-U(t)v_0, \quad \widetilde{v}(t)=U(t)v_0-e^{-tS}v_0, 
\quad 
v(t)=e^{-tS}v_0. 
\]
Then 
applying Lemmas \ref{lem:5.1} and \ref{lem:5.2}, we have for every $t\geq 1$, 
\begin{align*}
t \|u(t)-e^{-tS}v_0\|_{L^2}
&\leq 
t\|\pa_t\widetilde{u}(t)\|_{L^2}+t\|\widetilde{v}(t)\|_{L^2}
\\
&\leq 
C_{5}^{1/2}\Big(\|S^{1/2}u_0\|_{L^2}^2+\|u_0\|_{L^2}^2\Big)^{1/2}+C_6^{1/2}\|v_0\|_{L^2}.
\end{align*}
On the other hand, one can check that 
for every $\beta\in [0,1)$ and $t\geq 1$, 
\begin{align*}
\left(
\int_{\R}\frac{\mathcal{E}(u;x,t)}{(\psi_1\psi_2)^{\beta}}\,dx
\right)^{1/2}
&\leq 
\left(
\int_{\R}\frac{\mathcal{E}(\pa_t\widetilde{u};x,t)}{(\psi_1\psi_2)^{\beta}}\,dx
\right)^{1/2}
+
\left(
\int_{\R}\frac{\mathcal{E}(\widetilde{v};x,t)}{(\psi_1\psi_2)^{\beta}}\,dx
\right)^{1/2}
\\
&+
\left(
\int_{\R}\frac{\mathcal{E}(v;x,t)}{(\psi_1\psi_2)^{\beta}}\,dx
\right)^{1/2}
\\
&\leq 
\big( E(\pa_t\widetilde{u};t) \big)^{1/2}
+
\big(E(\widetilde{v};t)\big)^{1/2}
+
\left(
\int_{\R}\frac{\mathcal{E}(v;x,t)}{(\psi_1\psi_2)^{\beta}}\,dx
\right)^{1/2}.
\end{align*}
By Lemma \ref{lem:hardy:V} {\bf (iii)},
the last term on the right-hand side of the above inequality can be estimated 
as 
\begin{align*}
\int_{\R}\frac{\mathcal{E}(v;x,t)}{(\psi_1\psi_2)^{\beta}}\,dx
&\leq 
\|\pa_tv\|_{L^2}^2
+
\int_{\R}\frac{(\pa_xv)^2+Vv^2}{(\psi_1\psi_2)^{\beta}}\,dx
\\
&\leq 
\|Sv\|_{L^2}^2+C_{1,3}\|Sv\|_{L^2}^{1+\beta/2}\|v\|_{L^2}^{1-\beta/2}
\\
&\leq 
\|S v\|_{L^2}^{2}
+
C_{1,3}\|Sv\|_{L^2}^{1+\beta/2}\|v\|_{L^2}^{1-\beta/2}.
\end{align*}
The analyticity of $e^{-tS}$ and Lemma \ref{lem:L2-wL1} give 
for every $t\geq 1$, 
\[
\int_{\R}\frac{\mathcal{E}(v;x,t)}{(\psi_1\psi_2)^{\beta}}\,dx
\leq 
\widetilde{K}_3 
\Big(
t^{-7/2}
+
t^{-(5+\beta)/2}\Big)\|\psi_Vv_0\|_{L^1}^2.
\]
Combining the above inequalities with 
Lemmas \ref{lem:5.1} and \ref{lem:5.2}, 
we consequently reach  the desired inequalities. 
\end{proof}
\begin{proof}[Proof of Theorem \ref{main2}]
We finally prove the optimality of the decay rates of
$\|u(t)\|_{L^2}$ and $E(u;t)$ via the same decomposition in the proof of Theorem \ref{main1}. 
\begin{align*}
&\left(
\int_1^t(1+\tau)^{1/2}\|u(\tau)\|_{L^2}^{2}\,d\tau
\right)^{1/2}
\\
&\geq 
\left(
\int_1^t(1+\tau)^{1/2}\|v(\tau)\|_{L^2}^{2}\,d\tau
\right)^{1/2}
-
\left(
\int_1^t(1+\tau)^{1/2}\|u(\tau)-v(\tau)\|_{L^2}^{2}\,d\tau
\right)^{1/2}
\\
&\geq
\left(
\int_1^t(1+\tau)^{1/2}\|v(\tau)\|_{L^2}^{2}\,d\tau
\right)^{1/2}
 -
\left(
\widetilde{K}_3\int_1^\infty(1+\tau)^{1/2}\tau^{-2}\,d\tau
\right)^{1/2}.
\end{align*}
Therefore Proposition \ref{prop:lower} implies the logarithmic growth in $t$ of 
$\int_0^t(1+\tau)^{1/2}\|u(\tau)\|_{L^2}^{2}\,d\tau$.
For the case of the energy functional is similar:
\begin{align*}
&\left(
\int_1^t(1+\tau)^{3/2}E(u;\tau)\,d\tau
\right)^{1/2}
\\
&\geq 
\left(
\int_1^t(1+\tau)^{3/2}E(v;\tau)\,d\tau
\right)^{1/2}
-
\left(
\int_1^t\tau^{3/2}E(u-v;\tau)\,d\tau
\right)^{1/2}
\\
&\geq 
\left(
\int_1^t(1+\tau)^{3/2}E(v;\tau)\,d\tau
\right)^{1/2}
-
\left(
\widetilde{K}_5\int_1^\infty(1+\tau)^{3/2}\tau^{-3}\,d\tau
\right)^{1/2}.
\end{align*}
The proof is complete.
\end{proof}

\subsection*{Declarations}
\begin{description}
\item[Data availability] Data sharing not applicable to this article as no datasets were generated or analysed during the current study.
\item[Conflict of interest] The author declares that he has no conflict of interest.
\end{description}

\end{document}